\newtheorem{theorem}{Theorem}
\newtheorem{corollary}[theorem]{Corollary}
\newtheorem{example}[theorem]{Example}
\newtheorem{lemma}[theorem]{Lemma}
\newtheorem{proposition}[theorem]{Proposition}
\newtheorem{remark}[theorem]{Remark}
\newenvironment{proof}[1][Proof]{\textbf{#1.} }{\ \rule{0.5em}{0.5em}}
\begin{document}
\author {Martial Longla\\ University of Mississippi\\Department of mathematics}
\title{Remarks on limit theorems for reversible Markov processes and their applications} \date{January 2017}
\maketitle

\abstract{ We propose some backward-forward martingale decompositions for functions of reversible Markov chains. These decompositions are used to prove the functional CLT for reversible Markov chains with asymptotically linear variance of partial sums. We also provide a proof of the equivalence between asymptotic linearity of the variance and convergence of the integral of $1/(1-t)$ with respect to the associated spectral measure $\rho$. We also study the asymptotic behavior of linear processes having as innovations mean zero square integrable functions of stationary reversible Markov chains. We apply this study to several cases of reversible stationary Markov chains that arise in regression estimation. 
}

\bigskip 
Key words: Markov chains, central limit theorem, stationary linear processes, reversible processes, Martingales, forward-backward decomposition.

\bigskip
AMS 2000 Subject Classification: Primary 60F05, 60G10, 60F17, 60G05.

\section{Introduction}

An important theoretical question with numerous practical implications is to prove stability of the central limit theorem under formation of linear sums.
By this we understand that if $\displaystyle S_n(\xi)/\sqrt{n}=\sum_{i=1}^{n}\xi_{i}/\sqrt{n}$ converges in distribution to a normal variable, then the same result holds for $S_{n}(X)\ $ properly normalized, where $(X_i, 1\le i\le n) $ are linear functions of $(\xi_i, -\infty< i< \infty)$. This problem was first studied in the literature by Ibragimov (1962), who proved that if $(\xi_{i}, {i\in{\mathbb{Z}}})$ are centered i.i.d. with finite second moments, $S_{n}(X)/b_{n}$ satisfies the central limit theorem (CLT). The extra condition of finite second moment was removed by Peligrad and Sang (2013). The central limit theorem for $S_{n}(X)/b_{n}$ for the case when the innovations are square integrable martingale differences was proved by Peligrad and Utev (1997) and (2006), where an extension to generalized martingales was also given.

On the other hand, motivated by applications to unit root testing and to isotonic regression, a related question is to study the limiting behavior of $S_{[nt]}(X)/b_{n}$ (here and throughout the paper $[x]$ denotes the integer part of $x$). The first results on this topic are due to Davydov (1970), who established convergence to fractional Brownian motion for the case of i.i.d. innovations $(\xi_i, 1\le i\le n)$. Extensions to dependent settings under certain projection criteria can be found for instance in Wu and Min (2005) and Dedecker et al. (2011), among others.

Kipnis and Varadhan (1986) considered partial sums $S_{n}(X)$  of an additive mean zero functional of a stationary reversible ergodic Markov chain and showed that the convergence of $var(S_{n})/n(X)$ implies convergence of $\{S_{[nt]}(X)/\sqrt{n},$ $0\leq t\leq1\}$ to the Brownian motion. There is a considerable number of papers that further extend and apply this result to infinite particle systems, random walks, processes in random media, Metropolis-Hastings algorithms. Among others, Kipnis and Landim (1999) considered interacting particle systems, Tierney (1994) discussed the applications to Markov Chain Monte Carlo and Wu (1999) studied the law of the iterated logarithm.
Here, we will consider other cases of linear processes such as the causal model, applications to kernel estimation and linear regression.

We review the central limit theorem for stationary Markov chains with self-adjoint operator and general state space. We investigate the case when the variance of the partial sum is asymptotically linear in $n,$ and propose a new proof of the functional CLT for ergodic reversible Markov chains in Corollary 1.5 of Kipnis and Varadan (1986). We prove the equivalence of $\displaystyle \lim_{n\to\infty}var(S_{n}(\xi))/n<\infty $ and convergence of $\displaystyle \int_{-1}^{1}\frac{\rho(dt)}{1-t}$ for a mean zero function $f$ of a stationary reversible Markov chain. Here, $\rho$ is the spectral measure corresponding to $f$. This equivalence is used to provide a new forward-backward martingale decomposition for the given class of processes. Among new results of this paper, is a forward-backward martingale decomposition for stationary reversible Markov chains. In Proposition \ref{convL2}, we state a convergence theorem that helps establish a martingale convergence theorem in Lemma \ref{MartingaleConv}. A new proof of the central limit theorem based on Heyde (1974) is provided. Throughout this paper we use the spectral theory of bounded self-adjoint operators. In Section 1 we have the introduction, Section 2 is about the forward-backward martingale decomposition and Section 3 tackles the new proof of the functional central limit theorem for ergodic reversible Markov chains and Section 4 provides applications to various statistical models.

\subsection{Definitions and notations}

We assume that $(\gamma_{n})_{n\in\mathbb{Z}}$ is a stationary reversible Markov chain
defined on a probability space $(\Omega,\mathcal{F},\mathbb{P})$ with values
in a general state space $(S,\mathcal{A})$. The marginal distribution is
denoted by $\pi(A)=\mathbb{P}(\gamma_{0}\in A)$. Assume that there is a
regular conditional distribution for $\gamma_{1}$ given $\gamma_{0}$ denoted
by $$Q(x,A)=\mathbb{P}(\gamma_{1}\in A|\,\gamma_{0}=x).$$ Let $Q$ also denote
the Markov operator {acting via $$(Qg)(x)=\int_{S}g(s)Q(x,ds).$$ Next, let
$\mathbb{L}_{0}^{2}(\pi)$ be the set of measurable functions on $S$ such that
$\int g^{2}d\pi<\infty$ and $\int gd\pi=0.$ If }${g,h}\in${$\mathbb{L}_{0}%
^{2}(\pi),$ the integral }$\int_{S}g(s)h(s)d\pi$ will sometimes be denoted by
$<g,h>$.

{For some function }${g}\in${$\mathbb{L}_{0}^{2}(\pi)$, let }%
\begin{equation}
{\xi_{i}=g(\gamma}_{i}{),\ S_{n}(\xi)=\sum\limits_{i=1}^{n}\xi_{i},\ }%
\sigma_{n}({g)}=(\mathbb{E}S_{n}^{2}({\xi)})^{1/2}. \label{defcsi}%
\end{equation}
{\ Denote by $\mathcal{F}_{k}$ the $\sigma$--field generated by $\gamma_{i}$
with $i\leq k$ and by }$\mathcal{I}$ the invariant $\sigma-$field{. }

For any integrable random variable $X$ we denote $\mathbb{E}_{k}%
X=\mathbb{E}(X|\mathcal{F}_{k}).$ With this notation, $\mathbb{E}_{0}\xi
_{1}=Qg(${$\gamma$}$_{0})=\mathbb{E}(\xi_{1}|${$\gamma$}$_{0}).$ We denote by
${{||X||}_{p}}$ the norm in {$\mathbb{L}^{p}$}$(\Omega,\mathcal{F}%
,\mathbb{P}).$

The Markov chain is called reversible if $Q=Q^{\ast},$ where $Q^{\ast}$ is the
adjoint operator of $Q$. In this setting, the condition of reversibility is
equivalent to requiring that $(${$\gamma$}$_{0},${$\gamma$}$_{1})$ and
$(\gamma_{1},\gamma_{0})$ have the same distribution. Equivalently%
\[
\int_{A}Q(\omega,B)\pi(d\omega)=\int_{B}Q(\omega,A)\pi(d\omega)
\]
for all Borel sets $A,B\in\mathcal{A}$. The spectral measure of $Q$ with
respect to ${g}$ is concentrated on $[-1,1]$ and will be denoted by $\rho
_{g}.$ Then
\[
\mathbb{E}(Q^{m}g(\gamma_{0})Q^{n}g(\gamma_{0}))=<Q^{m}g,Q^{n}g>=\int
\nolimits_{-1}^{1}t^{n+m}\rho_{g}(dt).
\]

We denote by $W(t)$ is the standard Brownian motion.
All throughout the paper $\Longrightarrow$ denotes convergence in distribution, $\to^P$ denotes convergence in probability and $[x]$ is the integer part of $x$. 

We also need to introduce here some very useful notions from the spectral theory.

\subsection{Spectral Theory of self-adjoint operators}

Self-adjoint operators have spectral families with certain regularity properties, beyond the properties shared by all spectral families, which are very important in the proof of the theorems in this paper. Recall that a linear vector space $\mathbb{H}$ is a Hilbert space, if it is endowed with an inner product $<.,.>$, associated with a norm $||.||$ and metric $d(.,.)$, such that every Cauchy sequence has a limit in $\mathbb{H}$. Elements $x, y$ of a Hilbert space are said to be orthogonal if $<x,y> = 0$.
Suppose there is a non-decreasing family $(M (\lambda), \lambda\in\mathbb{R})$ of closed subspaces of $\mathbb{H}$ depending on a real parameter $\lambda$, such that the intersection of all the $M (\lambda)$ is $\{0\}$ and their union is dense in $\mathbb{H}$. Recall that The family is \textquotedblleft non-decreasing\textquotedblright\, if $M(\lambda_{1}) \subset M (\lambda_{2})$ for $\lambda_{1}< \lambda_{2}$.
This property also extends to the associated family $(E(\lambda), \lambda\in\mathbb{R})$ of orthogonal projections on $M(\lambda)$. The associated family of orthogonal projections is called spectral family or resolution of the identity if 
$\displaystyle\lim_{\lambda\to -\infty}E(\lambda)=0 \quad \mbox{and}\quad \lim_{\lambda\to \infty}E(\lambda)=1.$ 

{\bf Spectral theorem for self-adjoint operators in Hilbert spaces:}{\it

Every self-adjoint operator $Q$ in a Hilbert space $\mathbb{H}$ admits an expression $\displaystyle Q=\int_{-\infty}^{\infty}\lambda dE(\lambda)$ by means of a spectral family ($E(\lambda), \lambda\in\mathbb{R}$) which is uniquely determined by $Q$.}

The family ($E(\lambda), \lambda\in\mathbb{R}$) yields valuable information on the spectral structure of $Q$: the location of its singular or absolutely continuous spectrum and its eigenvalues. Also, it naturally leads to the definition of functions $f(Q)$, for a wide family of functions $f$. When the operator is bounded, the integral can be taken over the spectrum $\sigma(Q)$ of the operator (set of points $\lambda$ for which there is no bounded inverse to $Q-\lambda I$, where $I$ is the identity operator). This applies to Markov operators (a Markov operator is a unity-preserving positive contraction).
The inner product in a Hilbert space allows to define $Q^{*}$, the adjoint operator to $Q$ by the formula $\displaystyle <Qx,y>=<x,Q^{*}y>, \quad \forall x,y \in \mathbb{H}.$ The operator $Q$ is self-adjoint if the above yields $Q^{*}=Q$. For more, see Conway (1990).

{\bf Example of Markov operator:}
{\it Assume that $(\xi_{n}, n\in\mathbb{Z})$ is the Markov chain
defined above. $Q$ induces an
operator acting via $\displaystyle (Qf)(x)=\int_{S}f(s)Q(x,ds)$ in the Hilbert space $\mathbb{L}^{2}(\pi)$. The defined operator Q is a Markov operator with spectrum on $[-1,1]$.
For such $Q$, the above representation becomes:
$\displaystyle Q=\int_{-1}^{1}\lambda dE(\lambda),\quad \mbox{leading to}$
$\displaystyle <Qf,f>=\int_{-1}^{1}\lambda d<E(\lambda)f,f>=\int_{-1}^{1}\lambda d\rho (\lambda),$
where $\rho (\lambda)$ denotes the spectral measure of the operator applied to $f$. 
}

Based on this example, for a reversible Markov chain generated by $Q$,
$$
\mathbb{E}(\mathbb{E}(X_{k}|\mathscr{F}_{0})\mathbb{E}(X_{j}|\mathscr{F}_{0}))=\int_{-1}^{1}t^{k+j}\rho(dt).\label{SpQ}
$$

\section{More about reversible Markov chains}

\subsection{Another look at the variance} The following important by itself lemma holds.
\begin{lemma} \label{Kipeq}
\quad

Let $(\xi_{i}, i\in \mathbb{N})$ be defined by (\ref{defcsi}). Assume that $\rho$ is the spectral measure such that there is no atoms at $1$ and $-1$. Then, $\displaystyle
\frac{var(S_{n})}{n}\to \sigma^{2}<\infty \Longleftrightarrow \int_{-1}^{1}\frac{1}{1-t}\rho(dt)<\infty. \label{eqvar}
$

Moreover, $\displaystyle\lim_{n\to\infty}\frac{var(S_{n})}{n}=\int_{-1}^{1}\frac{1+t}{1-t}\rho(dt).$
\end{lemma}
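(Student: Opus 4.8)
The plan is to reduce the whole statement to a single spectral integral for $Var(S_n)/n$ and then play Fatou's lemma against dominated convergence. First I would put the covariances in spectral form: by stationarity and reversibility, $\mathbb{E}(X_iX_j)=\langle f,Q^{|i-j|}f\rangle=\int_{-1}^{1}t^{|i-j|}\rho(dt)$, which is exactly the boxed spectral relation of the excerpt with the appropriate exponent. Expanding $Var(S_n)=\sum_{i,j=1}^{n}\mathbb{E}(X_iX_j)$ and collecting terms according to $k=|i-j|$ gives
$$\frac{Var(S_n)}{n}=\int_{-1}^{1}g_n(t)\,\rho(dt),\qquad g_n(t)=1+2\sum_{k=1}^{n-1}\Big(1-\frac{k}{n}\Big)t^k.$$
The entire lemma is then a statement about this Fej\'er-type kernel $g_n$ integrated against the finite measure $\rho$.

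Next I would record the two properties of $g_n$ that drive the argument. Summing the geometric and arithmetic-geometric series yields the closed form $g_n(t)=\big[(1-t^2)-\tfrac{2t}{n}(1-t^n)\big]/(1-t)^2$, from which $g_n\ge0$ (equivalently, $n g_n(t)=\sum_{i,j}t^{|i-j|}$ is the quadratic form of the nonnegative-definite AR(1) covariance matrix $(t^{|i-j|})$ evaluated at the all-ones vector) and, for every fixed $t\in[-1,1)$, $g_n(t)\to h(t):=\frac{1+t}{1-t}$. I would also note at the outset that the two integral conditions coincide, since $\frac{1+t}{1-t}=\frac{2}{1-t}-1$ and $\rho$ is finite, so $\int h\,d\rho<\infty\iff\int\frac{1}{1-t}\rho(dt)<\infty$; the substance of the lemma is thus the equivalence with asymptotic linearity of the variance. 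If $\rho(\{1\})>0$ then $g_n(1)=n$ forces both $Var(S_n)/n\to\infty$ and $\int\frac{1}{1-t}\rho(dt)=\infty$, so both sides fail together and I may restrict to $\rho(\{1\})=0$.

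For the forward direction I would apply Fatou's lemma to the nonnegative $g_n$: since $g_n\to h$ pointwise, $\int h\,d\rho=\int\liminf_n g_n\,d\rho\le\liminf_n\int g_n\,d\rho=\lim_n Var(S_n)/n=\sigma^2<\infty$, so $\int\frac{1}{1-t}\rho(dt)<\infty$. For the reverse direction, assuming $\int h\,d\rho<\infty$, I would pass to the limit under the integral by Lebesgue's dominated convergence theorem. The one delicate point is the domination. On $[0,1)$ it is immediate that $0\le g_n(t)\le1+2\sum_{k\ge1}t^k=h(t)$, because $0\le1-\frac{k}{n}\le1$ and $t^k\ge0$; but on $[-1,0)$ the powers $t^k$ alternate in sign and $g_n$ may slightly exceed $h$, so $h$ alone will not dominate there. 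This is where the closed form is needed: for $t\in[-1,0)$ one has $(1-t)^2\ge1$ while the numerator is at most $1+\frac{4}{n}\le5$, whence $g_n\le5$ uniformly. Therefore $g_n\le h+5$ on all of $[-1,1)$, an $\rho$-integrable bound, and dominated convergence gives $\frac{Var(S_n)}{n}=\int g_n\,d\rho\to\int h\,d\rho=\int_{-1}^{1}\frac{1+t}{1-t}\rho(dt)$.

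Combining the two directions also yields the ``Moreover'' assertion: once either hypothesis supplies $\int\frac{1}{1-t}\rho(dt)<\infty$, the dominated-convergence computation pins the limit to $\sigma^2=\int_{-1}^{1}\frac{1+t}{1-t}\rho(dt)$. The main obstacle, as indicated, is controlling $g_n$ on $[-1,0)$ where it can overshoot $h$; away from that the proof is the standard Fatou/dominated-convergence dichotomy for the Fej\'er kernel.
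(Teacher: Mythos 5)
Your proof is correct, and it reaches the key spectral identity by a genuinely different (and more elementary) route than the paper. The paper first writes $S_{n}$ as $\mathbb{E}(S_{n}|\mathscr{F}_{0})$ plus a sum of orthogonal martingale differences, uses $\mathbb{E}(\mathbb{E}(\cdot|\mathscr{F}_{j})-\mathbb{E}(\cdot|\mathscr{F}_{j-1}))^{2}=\mathbb{E}(\mathbb{E}(\cdot|\mathscr{F}_{j}))^{2}-\mathbb{E}(\mathbb{E}(\cdot|\mathscr{F}_{j-1}))^{2}$ together with stationarity, and only then converts each term to a spectral integral, arriving at an integrand built from squares of geometric sums; you instead expand $Var(S_{n})=\sum_{i,j}\mathbb{E}(X_{i}X_{j})$ directly and obtain the Fej\'er-type kernel $g_{n}(t)=1+2\sum_{k=1}^{n-1}(1-k/n)t^{k}$ in one step. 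From that point on the two arguments coincide: Fatou's lemma on the nonnegative kernels for the direction ``asymptotically linear variance $\Rightarrow$ $\int(1-t)^{-1}\rho(dt)<\infty$,'' and dominated convergence for the converse and for the limit formula $\int\frac{1+t}{1-t}\rho(dt)$. What your version buys, besides avoiding the martingale machinery entirely, is that the delicate points are made explicit: you isolate the atom at $t=1$ (where $g_{n}(1)=n$ makes both sides of the equivalence fail simultaneously), and you justify the domination carefully by splitting $[0,1)$, where $g_{n}\leq\frac{1+t}{1-t}$ termwise, from $[-1,0)$, where the closed form gives the uniform bound $g_{n}\leq 5$; the paper simply asserts a bound $|f_{n}|\leq 10/(1-t)$ for its integrand without this case analysis. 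The paper's derivation, on the other hand, previews the martingale decompositions used in its Section 2, so the two computations serve slightly different expository purposes.
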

\begin{proof}
It is well known that for a stationary reversible mean zero Markov chain $(\xi_{i}, i\in \mathbb{N})$,
$$\frac{var(S_{n})}{n}=\frac{1}{n}\Big(n~ var(X_0)+2\sum_{k=1}^{n-1}\sum_{j=k+1}^{n}\mathbb{E}(X_j X_k)\Big)=\frac{1}{n}\Big(n~ var(X_0)+2\sum_{k=1}^{n-1}\sum_{j=k+1}^{n}\mathbb{E}(X_0 X_{j-k})\Big).$$
The second equality sues stationarity of the Markov chain. Now, using the fact that for this sequence, we have the spectral representation
$$\displaystyle \mathbb{E}(X_{0}X_{k})=\mathbb{E}(\mathbb{E}(X_{0}|\mathscr{F}_{0})\mathbb{E}(X_{k}|\mathscr{F}_{0}))=\int_{-1}^{1}t^{k}\rho(dt), \quad \mbox{we obtain} $$

$$ \frac{var(S_{n})}{n}=\frac{1}{n}\Big(n~ \int_{-1}^{1}\rho(dt)+2\sum_{k=1}^{n-1}\sum_{j=k+1}^{n}\int_{-1}^{1}t^{j-k}\rho(dt)\Big)=\int_{-1}^{1}\rho(dt)+\frac{2}{n}\sum_{k=1}^{n-1}\sum_{u=1}^{n-k}\int_{-1}^{1}t^{u}\rho(dt).$$

Therefore, by simple calculations, we obtain 

$$\displaystyle\frac{var(S_{n})}{n}=\int_{-1}^{1}\frac{1+t}{1-t}\rho(dt)-\frac{2}{n}\sum_{k=1}^{n-1}\int_{-1}^{1}\frac{t^{n-k+1}}{1-t}\rho(dt)=\int_{-1}^{1}\frac{1+t}{1-t}\rho(dt)-\frac{2}{n}\sum_{j=2}^{n}\int_{-1}^{1}\frac{t^{j}}{1-t}\rho(dt).$$
Thus, if $\displaystyle\int_{-1}^{1}\frac{1}{1-t}\rho(dt)<\infty$, then we apply the Lebesgue's dominated convergence theorem to show that $\displaystyle \lim_{n\to\infty}\int_{-1}^{1}f_n(t)dt\to 0$, where $\displaystyle f_{n}(t)=
\frac{2t^{n}}{1-t}.$ The standard theorem on Cesaro means leads to 
$$\lim_{n\to\infty}\frac{2}{n}\sum_{j=2}^{n}\int_{-1}^{1}\frac{t^{j}}{1-t}\rho(dt)=0.$$
(For this sequence, we have $$|f_{n}|\leq 1/(1-t) \quad \rho-\mbox{almost everywhere,}$$ and $$f_{n}\to 0 \quad \rho-\mbox{almost everywhere}.$$ 
Moreover, $(1+t)/(1-t)$ is $\rho$-integrable, provided that $1/(1-t)$ is $\rho$-integrable. Therefore, $$\displaystyle\lim_{n\to\infty}\frac{var(S_{n})}{n}=\int_{-1}^{1}\frac{1+t}{1-t}\rho(dt) <\infty. $$

Applying Fatou lemma to the above functions, $$\displaystyle\lim_{n\to\infty}\frac{var(S_{n})}{n}\ge \int_{-1}^{1}\liminf_{n}(\frac{1+t}{1-t}-\frac{2}{n}\sum_{j=2}^{n}\frac{t^j}{1-t})\rho(dt)=\int_{-1}^{1}\frac{1+t}{1-t}\rho(dt).$$
Therefore, if $\displaystyle\frac{Var(S_{n})}{n}$ is convergent, then $\displaystyle\int_{-1}^{1}\frac{1+t}{1-t}\rho(dt)<\infty$. This leads to $\displaystyle \int_{-1}^{1}\displaystyle\frac{\rho(dt)}{1-t}<\infty$. So, $$\displaystyle\lim_{n\to\infty}\frac{var(S_{n})}{n}=\int_{-1}^{1}\frac{1+t}{1-t}\rho(dt)=2\int_{-1}^{1}\frac{1}{1-t}\rho(dt)-\mathbb{E}(X_{0}^{2}).$$
This leads to the conclusion of the lemma.
\end{proof}
\bigskip

\subsection{Forward-Backward martingale decomposition}

Martingale decomposition of sequences of random variables is a very important tool in probability theory. The proof of central limit theorems is often based on these decompositions. One shows that the variable can be represented as a sum of a martingale and a \textquotedblleft remainder\textquotedblright\ with suitable properties. For more on this topic, see Wu (1999), Wu and Woodroofe (2004), Zhao and al. (2010). For stationary reversible Markov chains, we obtain more flexibility to form martingale differences for triangular arrays. This allows to obtain in the limit (convergence in $\mathbb{L}^{2}$) martingale differences that sum up to martingales.

From Longla and al. (2012), for triangular arrays of random variables, we have 

$\displaystyle \xi_{k}+\xi_{k+1}=D_{k+1}^{n}+\tilde{D}_{k}^{n}+\frac{1}{n}\mathbb{E}_{k}
(S_{n}-S_{k})+\frac{1}{n}\mathbb{E}_{k+1}(S_{k+n+1}-S_{k+1}),$ where $\tilde{D}_{k}^{n}$ is the equivalent of $D_{k}^{n}$ for the reversed martingale, and
\begin{equation}
D_{k}^{n}=\frac{1}{n}\sum_{i=0}^{n-1}[\mathbb{E}_{k}(S_{k+i})-\mathbb{E}_{k-1}(S_{k+i})]\text{.} 
\label{martdef}%
\end{equation}
Denoting $B_{n,k}=\frac{1}{n}\mathbb{E}_{k}
(S_{n}-S_{k})$, from the above formula we obtain 
\begin{equation}
\xi_{k}+\xi_{k+1}=D_{k+1}^{n}+\tilde{D}_{k}^{n}+B_{n,k}+B_{n,k+1}.\label{Xk1}
\end{equation}

We shall show that $B_{n,k}$ and $B_{n,k+1}$ converge to $0$ in $\mathbb{L}^{2}$.

\begin{proposition}
Under the assumption of asymptotic linearity of the variance of partial sums, $B_{n,k}\to 0$ in $\mathbb{L}^{2}$ uniformly in $k$ as $n\to \infty$.
\end{proposition}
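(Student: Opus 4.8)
The plan is to reduce $\|B_{n,k}\|_2^2$ to a single spectral integral and then exploit the finiteness of $\int_{-1}^{1}(1-t)^{-1}\rho(dt)$, which the asymptotic-linearity hypothesis guarantees through Lemma \ref{Kipeq}.

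First I would write $B_{n,k}$ explicitly. Since $(\xi_n)$ is Markov, $\mathbb{E}_k(X_{k+i})=(Q^{i}f)(\xi_k)$, so that
\[
B_{n,k}=\frac{1}{n}\mathbb{E}_k(S_n-S_k)=\frac{1}{n}\sum_{i=1}^{n-k}(Q^{i}f)(\xi_k).
\]
Taking $\mathbb{L}^2$-norms, expanding the square, and applying the spectral identity $\mathbb{E}\big(\mathbb{E}_0(X_i)\mathbb{E}_0(X_j)\big)=\int_{-1}^{1}t^{i+j}\rho(dt)$ together with stationarity, I obtain
\[
\|B_{n,k}\|_2^{2}=\frac{1}{n^{2}}\int_{-1}^{1}\big(t+t^{2}+\cdots+t^{\,n-k}\big)^{2}\rho(dt).
\]

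Next I would bound the integrand pointwise. Writing $m=n-k$, there are two elementary estimates for $|t+\cdots+t^{m}|=|t|\,|1+t+\cdots+t^{m-1}|$: counting terms gives the bound $m$, while summing the geometric series gives $|t|\,\tfrac{1+|t|^{m}}{1-t}\le\tfrac{2}{1-t}$ on $[-1,1)$. The key step is to interpolate between them via $\min(a,b)^{2}\le ab$:
\[
\big(t+\cdots+t^{m}\big)^{2}\le\min\!\left(m,\tfrac{2}{1-t}\right)^{2}\le\frac{2m}{1-t},\qquad t\in[-1,1).
\]
Since the hypothesis and Lemma \ref{Kipeq} force $\rho(\{1\})=0$, this bound holds $\rho$-almost everywhere. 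Using finally $m\le n$ to cancel one power of $n$,
\[
\|B_{n,k}\|_2^{2}\le\frac{2m}{n^{2}}\int_{-1}^{1}\frac{\rho(dt)}{1-t}\le\frac{2}{n}\int_{-1}^{1}\frac{\rho(dt)}{1-t}.
\]
Because Lemma \ref{Kipeq} makes $\int_{-1}^{1}(1-t)^{-1}\rho(dt)$ finite, the right-hand side is independent of $k$ and tends to $0$ as $n\to\infty$, which is exactly convergence to $0$ uniform in $k$.

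The main obstacle is precisely the uniformity in $k$: the two naive bounds are each inadequate, since $(t+\cdots+t^{m})^{2}\le m^{2}\le n^{2}$ only yields $\|B_{n,k}\|_2^2\le\mathbb{E}(X_0^2)$, while $(t+\cdots+t^m)^2\le 4(1-t)^{-2}$ leads to $\int(1-t)^{-2}\rho(dt)$, which need not be finite. The crux is the interpolation $\min(a,b)^{2}\le ab$, which trades the crude term-count $m$ against the geometric bound $2/(1-t)$ so that exactly one factor of $n$ survives in the denominator while only the finite integral of $1/(1-t)$ is incurred.
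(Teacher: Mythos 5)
Your proposal is correct and follows essentially the same route as the paper: reduce $\|B_{n,k}\|_2^2$ to $\frac{1}{n^2}\int_{-1}^{1}(t+\cdots+t^{n-k})^2\rho(dt)$ via stationarity and the spectral representation, bound the integrand by $\frac{2}{n(1-t)}$, and invoke Lemma \ref{Kipeq} for finiteness of $\int(1-t)^{-1}\rho(dt)$. Your interpolation $\min(a,b)^2\le ab$ between the term-count bound and the geometric bound is precisely the (unstated) justification for the pointwise inequality the paper asserts, so you have in fact supplied a detail the paper omits.
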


\begin{proof}
To show that $B_{n,k}$ converges uniformly in $k$ to $0$ in $\mathbb{L}^{2}$, it is enough to show that the variance converges to $0$ uniformly in $k$, and the expected value is equal to $0$. The mean zero assumption solves the problem of the expected value, and we have
$$\displaystyle var(B_{n,k})=\frac{1}{n^{2}}\mathbb{E}(\mathbb{E}_{k}(S_{n}-S_{k}))^{2}.$$
From stationarity, we obtain $$\displaystyle var(B_{n,k})=\frac{1}{n^{2}}\mathbb{E}(\mathbb{E}_{0}(S_{n-k}))^{2}.$$
Using the spectral theorem, we get 
$$\displaystyle var(B_{n,k})=\frac{1}{n^{2}}\int_{-1}^{1}(t+\cdots+t^{n-k})^{2}\rho{dt}=\int_{-1}^{1}f_{n}(t)\rho{dt}.$$
Here $$\displaystyle 0\leq f_{n}(t)=\frac{1}{n^{2}}(t+\cdots+t^{n-k})^{2}\leq \frac{2}{n}\frac{1}{1-t} \quad \rho-\mbox{almost surely.}$$
Applying Lemma \ref{Kipeq}, 
$$\displaystyle var(B_{n.k})\leq \frac{2}{n}\int_{-1}^{1}\frac{\rho(dt)}{1-t} \to 0 \quad \mbox{as}\quad n\to\infty \quad \mbox{because}\quad \int_{-1}^{1}\frac{\rho(dt)}{1-t}<\infty.$$
So, $B_{n,k}\to 0$ uniformly in $\mathbb{L}^{2}$. 
\end{proof}

\begin{proposition}[An $\mathbb{L}^{2}$ convergence theorem] \label{convL2}
\quad

Let $(\gamma_{i}, i\in\mathbb{Z})$ be a reversible stationary Markov chain. Let $(\xi_{i}, i\in\mathbb{Z})$ be defined by (\ref{defcsi}). If $\displaystyle
var(S_{n})/n \to \sigma^{2}\neq0, \quad\mbox{then}
$
$\displaystyle
\sum_{i=0}^{n}(\mathbb{E}(\xi_{i}|\mathscr{F}_{1})-\mathbb{E}(\xi_{i}|\mathscr{F}_{0})) \quad \mbox{converges in}\quad \mathbb{L}^{2}.
$ 
\end{proposition}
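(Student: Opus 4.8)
The plan is to recognize the sum as a single martingale increment and then push everything onto the spectral measure $\rho$. Write $g_n=\sum_{i=0}^{n}(\mathbb{E}(X_i|\mathscr{F}_1)-\mathbb{E}(X_i|\mathscr{F}_0))$. First I would observe that the $i=0$ term vanishes, since $X_0$ is $\mathscr{F}_0$-measurable, so by linearity $g_n=\mathbb{E}_1(S_n)-\mathbb{E}_0(S_n)$, i.e. the increment at time $1$ of the Doob martingale $k\mapsto\mathbb{E}_k(S_n)$. Because this increment is centered given $\mathscr{F}_0$, each $g_n$ is orthogonal to $\mathbb{L}^2(\mathscr{F}_0)$. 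Rather than only track the norms $\|g_n\|$ (which would not by itself give a limit), I would prove directly that $(g_n)$ is Cauchy in $\mathbb{L}^2$.

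To compute inner products I would use the Markov property to write $\mathbb{E}_0(S_n)=\sum_{i=1}^{n}(Q^if)(\xi_0)$ and $\mathbb{E}_1(S_n)=\sum_{j=0}^{n-1}(Q^jf)(\xi_1)$, together with the spectral identity $\langle Q^af,Q^bf\rangle=\int_{-1}^{1}t^{a+b}\rho(dt)$ and the stationarity $\xi_1\sim\pi$. The decisive simplification is that both mixed terms $\langle\mathbb{E}_1(S_m),\mathbb{E}_0(S_n)\rangle$ and $\langle\mathbb{E}_0(S_m),\mathbb{E}_1(S_n)\rangle$ collapse to $\langle\mathbb{E}_0(S_m),\mathbb{E}_0(S_n)\rangle$ by the tower property, the inner factor being $\mathscr{F}_0$-measurable. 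This gives $\langle g_m,g_n\rangle=\langle\mathbb{E}_1(S_m),\mathbb{E}_1(S_n)\rangle-\langle\mathbb{E}_0(S_m),\mathbb{E}_0(S_n)\rangle$, and after inserting the geometric sums $P_k(t)=(1-t^k)/(1-t)$ and using $(t+\cdots+t^k)=tP_k(t)$, one is led to the closed form
\[
\langle g_m,g_n\rangle=\int_{-1}^{1}\frac{(1-t^m)(1-t^n)(1+t)}{1-t}\,\rho(dt).
\]

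Expanding $\|g_m-g_n\|^2=\langle g_m,g_m\rangle-2\langle g_m,g_n\rangle+\langle g_n,g_n\rangle$ then telescopes to
\[
\|g_m-g_n\|^2=\int_{-1}^{1}\frac{(1+t)(t^n-t^m)^2}{1-t}\,\rho(dt),
\]
since $(1-t^m)-(1-t^n)=t^n-t^m$. The integrand is bounded by $4(1+t)/(1-t)=8/(1-t)-4$, which is $\rho$-integrable precisely because the hypothesis $Var(S_n)/n\to\sigma^2$ yields $\int_{-1}^{1}1/(1-t)\,\rho(dt)<\infty$ via Lemma \ref{Kipeq} (and $\rho$ is finite, with total mass $\mathbb{E}(X_0^2)$). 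As $(t^n-t^m)^2\to 0$ for every $t\in(-1,1)$, the dominated convergence theorem forces $\|g_m-g_n\|^2\to 0$ as $m,n\to\infty$, so $(g_n)$ is Cauchy and hence converges in $\mathbb{L}^2$.

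The main obstacle is the control near the endpoint $t=1$, where the dominating function is singular: I must lean on Lemma \ref{Kipeq} to secure $\rho$-integrability of $1/(1-t)$, and I should check that $\rho$ carries no atom at $t=1$ (true because $f$ is mean zero, hence orthogonal to the eigenspace of the eigenvalue $1$), so that the pointwise limit of the integrand is legitimate and the dominated convergence step is valid. By contrast, the algebraic collapse of the cross terms and the simplification of the geometric sums are routine once the spectral identity is available.
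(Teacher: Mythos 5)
Your argument is correct and follows essentially the same route as the paper: both prove the Cauchy property in $\mathbb{L}^{2}$ by reducing $\|g_m-g_n\|^2$ (via the identity $\langle g_m,g_n\rangle=\langle\mathbb{E}_1 S_m,\mathbb{E}_1 S_n\rangle-\langle\mathbb{E}_0 S_m,\mathbb{E}_0 S_n\rangle$, which is the inner-product form of the paper's orthogonality step) to the same spectral integral $\int_{-1}^{1}(1+t)(t^{n}-t^{m})^{2}(1-t)^{-1}\rho(dt)$, and then apply dominated convergence using $\int_{-1}^{1}(1-t)^{-1}\rho(dt)<\infty$ from Lemma \ref{Kipeq}. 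The only difference is bookkeeping (closed-form inner products versus the variance of the tail sum), and your worry about an atom at $t=1$ is already resolved by the integrability of $1/(1-t)$, which forces $\rho(\{1\})=0$.
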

\bigskip

\begin{proof}
To prove Proposition \ref{convL2}, we shall show that the sequence is a Cauchy sequence in $\mathbb{L}_{2}$. Define
$$\displaystyle A_{n,p}=\mathbb{E}(\sum_{i=n}^{p}\mathbb{E}(\xi_{i}|\mathscr{F}_{1})-\mathbb{E}(\xi_{i}|\mathscr{F}_{0}))^{2}=\mathbb{E}(\mathbb{E}(S_{p}-S_{n-1}|\mathscr{F}_{1})-\mathbb{E}(S_{p}-S_{n-1}|\mathscr{F}_{0}))^{2},\quad
\forall p>n: p,n\in \mathbb{N}.$$
Squaring the quantity and computing the expected value by conditioning on $\mathscr{F}_{0}$ for the cross therm, taking into account the Markov property and the fact that $\mathscr{F}_{0} \subset \mathscr{F}_{1}$, we obtain
$$\displaystyle A_{n,p}=\mathbb{E}(\mathbb{E}(S_{p}-S_{n-1}|\mathscr{F}_{1}))^{2}-\mathbb{E}(\mathbb{E}(S_{p}-S_{n-1}|\mathscr{F}_{0}))^{2}=$$
$$\displaystyle \mathbb{E}(\mathbb{E}(S_{p-1}-S_{n-2}|\mathscr{F}_{0}))^{2}-\mathbb{E}(\mathbb{E}(S_{p}-S_{n-1}|\mathscr{F}_{0}))^{2}.$$
Recalling from spectral calculus that for a reversible Markov chain we have the representation (\ref{SpQ}), we obtain
$$\displaystyle A_{n,p}=\int_{-1}^{1}(\sum_{j=n-1}^{p-1}t^{j})^{2}\rho(dt)-\int_{-1}^{1}(\sum_{j=n}^{p}t^{j})^{2}\rho(dt)=\int_{-1}^{1}(\sum_{j=n-1}^{p-1}t^{j})^{2}(1-t^{2})\rho(dt).$$

\noindent $\displaystyle A_{n,p}=\int_{-1}^{1}t^{2n-2}(1+t+\cdots+t^{p-n})^{2}(1-t^{2})\rho(dt)=\int_{-1}^{1}t^{2n-2}\frac{(1-t^{p-n+1})^{2}(1-t^{2})}{(1-t)^{2}}\rho(dt),$
leading to
$$\displaystyle A_{n,p}=\int_{-1}^{1}\frac{t^{2n-2}(1-t^{p-n+1})^{2}(1+t)}{1-t}\rho(dt)\leq 8\int_{-1}^{1}\frac{t^{2n-2}}{1-t}\rho(dt).$$
By the Lebesgue dominated convergence theorem, $$\displaystyle \limsup_{n}\int_{-1}^{1}\frac{t^{2n-2}}{1-t}\rho(dt)\leq \int_{-1}^{1}\limsup_{n}\frac{t^{2n-2}}{1-t}\rho(dt)=0.$$
Therefore, $A_{n,p}\to 0$ as $n \to \infty$. So, the sequence is a Cauchy sequence in $\mathbb{L}^{2}$.
\end{proof}

We shall now show the following lemma to finish the martingale decomposition.

\begin{lemma}[A martingale difference convergence theorem] \label{MartingaleConv}
\quad

If $Var(S_{n})/n\to \sigma^{2}<\infty$, then the sequences $D_{k}^{n}$ and $\tilde{D}_{k}^{n}$ defined above converge in $\mathbb{L}^{2}$ respectively to a martingale difference sequence and a reversed martingale difference.
\end{lemma}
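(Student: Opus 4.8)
The plan is to prove the statement for $D_k^n$ and then obtain the claim for $\tilde D_k^n$ by the same argument applied to the time-reversed chain, which has the same law by reversibility and yields a reversed martingale difference. So fix $k$ and first record that each $D_k^n$ is genuinely a martingale difference: by definition $D_k^n=\theta_k^n-\mathbb{E}_{k-1}(\theta_k^n)$ is $\mathscr{F}_k$-measurable and satisfies $\mathbb{E}_{k-1}(D_k^n)=0$. Consequently it suffices to prove that $D_k^n$ converges in $\mathbb{L}^2$ as $n\to\infty$: the limit is then automatically $\mathscr{F}_k$-measurable and, since conditional expectation is an $\mathbb{L}^2$-contraction, has vanishing conditional expectation given $\mathscr{F}_{k-1}$, hence is a martingale difference; by stationarity the family of limits is a stationary martingale difference sequence.

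The key observation is that the definition already exhibits a Ces\`aro structure. Setting $\tilde T_i^{(k)}=\mathbb{E}_k(S_{k+i})-\mathbb{E}_{k-1}(S_{k+i})$, the definition of $D_k^n$ reads exactly $D_k^n=\tfrac1n\sum_{i=0}^{n-1}\tilde T_i^{(k)}$, so $D_k^n$ is precisely the Ces\`aro average over $i$ of the sequence $(\tilde T_i^{(k)})_{i\ge 0}$. Moreover, since the summands $X_m$ with $m\le k-1$ are $\mathscr{F}_{k-1}$-measurable and cancel, one has $\tilde T_i^{(k)}=\sum_{m=k}^{k+i}[\mathbb{E}_k(X_m)-\mathbb{E}_{k-1}(X_m)]$, which identifies $\tilde T_i^{(k)}$ with the $i$-th partial sum of the one-step increments appearing in Proposition \ref{convL2}.

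Next I would invoke Proposition \ref{convL2}: for $k=1$ it states exactly that $\tilde T_i^{(1)}=\sum_{m=1}^{1+i}(\mathbb{E}(X_m|\mathscr{F}_1)-\mathbb{E}(X_m|\mathscr{F}_0))$ converges in $\mathbb{L}^2$ as $i\to\infty$, and for general $k$ the same convergence follows by stationarity, the shift being an $\mathbb{L}^2$-isometry. Once $(\tilde T_i^{(k)})_i$ converges in $\mathbb{L}^2$ to some limit $L$, its Ces\`aro means converge to the same limit, because $\big\|\tfrac1n\sum_{i=0}^{n-1}(\tilde T_i^{(k)}-L)\big\|_2\le \tfrac1n\sum_{i=0}^{n-1}\|\tilde T_i^{(k)}-L\|_2\to 0$, the Ces\`aro mean of a null sequence of norms being null. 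This gives $D_k^n\to L$ in $\mathbb{L}^2$ and completes the argument.

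The real content is thus already carried by Proposition \ref{convL2}; here the only genuine points are the bookkeeping that exhibits $D_k^n$ as a Ces\`aro average and the transfer from $k=1$ to general $k$ by stationarity, everything else being the standard fact that Ces\`aro means inherit $\mathbb{L}^2$-convergence. As a fallback independent of Proposition \ref{convL2}, I could instead estimate $\mathbb{E}\big((D_k^n-L)^2\big)$ directly through the spectral calculus used in Lemma \ref{Kipeq} and pass to the limit by dominated convergence; there the one delicate step is to dominate the integrand near $t=1$ by a constant multiple of $1/(1-t)$, which is $\rho$-integrable precisely by Lemma \ref{Kipeq}. I would keep the Ces\`aro argument as the main line and this spectral estimate in reserve.
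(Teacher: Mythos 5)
Your argument is correct and follows essentially the same route as the paper: identify $D_k^n$ as the Ces\`aro average of the partial sums $\mathbb{E}_k(S_{k+i})-\mathbb{E}_{k-1}(S_{k+i})$, invoke Proposition \ref{convL2} (transferred to general $k$ by stationarity) to get $\mathbb{L}^2$-convergence of those partial sums, pass to the Ces\`aro means, and note that the $\mathbb{L}^2$-limit of martingale differences is again a martingale difference because conditioning is an $\mathbb{L}^2$-contraction. The paper's proof does exactly this, using Jensen's inequality where you cite the contraction property, and likewise treats $\tilde D_k^n$ by the symmetric reversed-time argument.
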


\begin{proof}
The proof of Lemma \ref{MartingaleConv} is based on Proposition \ref{convL2}. We have, by Proposition \ref{convL2}, 
$$\displaystyle
\mathbb{E}(S_{n}|\mathscr{F}_{1})-\mathbb{E}(S_{n}|\mathscr{F}_{0})=\sum_{i=0}^{n}(\mathbb{E}(\xi_{i}|\mathscr{F}_{1})-\mathbb{E}(\xi_{i}|\mathscr{F}_{0})) \quad \mbox{converges in}\quad \mathbb{L}^{2}.
$$ 
Let $D_{1}$ be the limit in $\mathbb{L}^{2}$ of $\mathbb{E}(S_{n}|\mathscr{F}_{1})-\mathbb{E}(S_{n}|\mathscr{F}_{0})$. By the standard theorem on Cesaro means, we have
$$\displaystyle D_{1}^{n}=\frac{1}{n}\sum_{i=1}^{n-1}[\mathbb{E}(S_{i}|\mathscr{F}_{1})-\mathbb{E}(S_{i}|\mathscr{F}_{0})] \to D_{1} \quad\mbox{in} \quad \mathbb{L}^{2}.$$ The formula of $D_{k}^{n}$ can be obtained from Section 3.1 of Longla and al.
Similarly, we obtain $$D^{n}_{k}\to D_{k} \quad \mbox{in}\quad \mathbb{L}^{2} \quad \mbox{and} \quad D_{k} \quad \mbox{is}\quad \mathscr{F}_{k}-\mbox{measurable}.$$ 
By Jensen's inequality, and the double expectation rule,
$$\displaystyle\mathbb{E}\big(\mathbb{E}(D_{k}^{n}-D_{k}|\mathscr{F}_{k-1})\big)^{2}\leq \mathbb{E}\big(\mathbb{E}((D_{k}^{n}-D_{k})^{2}|\mathscr{F}_{k-1})\big)=\mathbb{E}(D_{k}^{n}-D_{k})^{2}.$$
Due to convergence in $\mathbb{L}^{2}$ of $D^{n}_{k}$ to $D_{k}$, it follows that $$\mathbb{E}(D_{k}^{n}|\mathscr{F}_{k-1}) \quad \mbox{converges in}\quad \mathbb{L}^{2} \quad \mbox{to}\quad \mathbb{E}(D_{k}|\mathscr{F}_{k-1}).$$ Thus, $\mathbb{E}(D_{k}|\mathscr{F}_{k-1})=0$ almost surely, because $\mathbb{E}(D^{n}_{k}|\mathscr{F}_{k-1})=0$. So, $(D_{k}, \mathscr{F}_{k}, k\in\mathbb{N})$ is a direct martingale difference. Due to stationarity of the initial sequence, $D_{k}^{n}$ is a stationary sequence. So, $D_{k}$ is stationary. The proof of the second part of the lemma is similar.
\end{proof}

\begin{proposition}[Forward-backward martingale decomposition]\label{FBMD}
\quad 

Let $(\xi_{i}, i\in \mathbb{Z})$ be defined by formula (\ref{defcsi}). Let $Var(S_{n})/n\sigma^{2}<\infty$. Then, $\displaystyle 2S_{n}=M_{n}^{d}+M_{n}^{r}+\xi_{n}-\xi_{0},$
where $M_{n}^{d}$, $M_{n}^{r}$ are direct and reversed martingales respectively.
\end{proposition}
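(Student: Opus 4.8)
The plan is to sum the pointwise-in-$k$ limit of the triangular-array identity (\ref{Xk1}) over a range of $k$ that produces $2S_n$ on the left, and then to read off the two martingales on the right. First I would fix $k$ and let $n\to\infty$ in (\ref{Xk1}). The left-hand side $X_k+X_{k+1}$ does not depend on $n$, while on the right Lemma \ref{MartingaleConv} gives $D_{k+1}^n\to D_{k+1}$ and $\tilde D_k^n\to \tilde D_k$ in $\mathbb{L}^2$, and the preceding Proposition on $B_{n,k}$ (together with stationarity applied to the shifted boundary term) gives $B_{n,k}\to 0$ and $B_{n,k+1}\to 0$ in $\mathbb{L}^2$. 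Since an $\mathbb{L}^2$-convergent sequence that is identically equal to a fixed random variable must converge to that variable, I obtain the exact almost-sure identity
\[
X_k+X_{k+1}=D_{k+1}+\tilde D_k, \quad \text{valid for every } k.
\]

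Next I would sum this identity over $k=0,1,\dots,n-1$. A direct bookkeeping of the left-hand side gives $\sum_{k=0}^{n-1}(X_k+X_{k+1})=X_0+X_n+2\sum_{i=1}^{n-1}X_i=2S_n+X_0-X_n$. On the right I would set $M_n^d:=\sum_{k=0}^{n-1}D_{k+1}=\sum_{k=1}^{n}D_k$ and $M_n^r:=\sum_{k=0}^{n-1}\tilde D_k$, so that $2S_n+X_0-X_n=M_n^d+M_n^r$, which rearranges to the claimed $2S_n=M_n^d+M_n^r+X_n-X_0$.

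Finally I would verify the martingale nature of the two sums. By Lemma \ref{MartingaleConv}, $(D_k,\mathscr{F}_k)$ is a stationary martingale-difference sequence, i.e. $D_k$ is $\mathscr{F}_k$-measurable with $\mathbb{E}(D_k|\mathscr{F}_{k-1})=0$; hence the partial sums $M_n^d=\sum_{k=1}^{n}D_k$ form a direct martingale adapted to $(\mathscr{F}_n)$. Symmetrically, $(\tilde D_k)$ is a reversed martingale-difference sequence, so $M_n^r=\sum_{k=0}^{n-1}\tilde D_k$ is a reversed martingale; this is exactly where reversibility enters, since it is what makes the backward construction $\tilde D_k$ available with the same distributional structure as the forward one.

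The step I expect to be the main obstacle is the first one: justifying that the $n\to\infty$ limit of (\ref{Xk1}) is an exact almost-sure equality rather than a merely asymptotic statement. This rests on the uniform, hence pointwise-in-$k$, $\mathbb{L}^2$ decay of the $B$ terms and on the $\mathbb{L}^2$ convergence of $D_{k+1}^n$ and $\tilde D_k^n$, both already established. The only care needed is that the second boundary term $\frac1n\mathbb{E}_{k+1}(S_{k+n+1}-S_{k+1})$ be reduced by stationarity before invoking the Proposition, and that the index ranges in the telescoping sum be tracked so that precisely the boundary terms $X_n-X_0$ survive.
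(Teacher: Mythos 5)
Your proposal is correct and follows essentially the same route as the paper: pass to the $\mathbb{L}^2$ limit in (\ref{Xk1}) using the vanishing of the $B_{n,k}$ terms and Lemma \ref{MartingaleConv} to get the exact identity $X_k+X_{k+1}=D_{k+1}+\tilde D_k$, then telescope. You in fact supply more of the index bookkeeping than the paper's own (very terse) proof does.
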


\begin{proof}
Recalling that $B_{n,k}\to 0$ , $B_{n,k+1}\to 0$ in $\mathbb{L}^{2}$, using Lemma \ref{MartingaleConv} and the representation of $\xi_{k}+\xi_{k+1}$ by formula (\ref{Xk1}), we obtain as $n\to \infty$, $\xi_{k}+\xi_{k+1}=D_{k+1}+\tilde{D}_{k}$. It follows that
\begin{equation}
2S_{n}=\sum_{i=1}^{n}D_{i}+\sum_{i=1}^{n}\tilde{D}_{i}+\xi_{n}-\xi_{0}.
\label{Martdec}
\end{equation}
\end{proof}

\subsection{Central limit theorem}

\begin{theorem}\label{HRevCLT} 
\quad

Let $(\gamma_{i}, i\in\mathbb{N})$ be a reversible ergodic Markov chain. If a mean zero sequence is defined by (\ref{defcsi}) with $\mathbb{E}\xi_{0}^{2} <\infty$, and $var(S_{n})/n\to \sigma^{2}\neq0$, then $S_{n}/\sigma\sqrt{n}\Longrightarrow N(0,1)$.
\end{theorem}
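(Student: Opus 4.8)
The plan is to deduce the theorem from the forward-backward martingale decomposition of Proposition~\ref{FBMD} combined with a stationary martingale central limit theorem of the type used by Heyde (1974). By Proposition~\ref{FBMD}, $2S_{n}=M_{n}^{d}+M_{n}^{r}+X_{n}-X_{0}$, with $M_{n}^{d}$ a forward martingale and $M_{n}^{r}$ a reversed martingale. Since $X_{0}$ and $X_{n}$ are identically distributed with $\mathbb{E}X_{0}^{2}<\infty$, we have $||X_{n}-X_{0}||_{2}\le 2||X_{0}||_{2}$, so $(X_{n}-X_{0})/\sqrt{n}\to 0$ in $\mathbb{L}^{2}$, hence in probability. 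By Slutsky's theorem it is therefore enough to prove that $(M_{n}^{d}+M_{n}^{r})/\sqrt{n}\Rightarrow N(0,4\sigma^{2})$, which is equivalent to the asserted $S_{n}/(\sigma\sqrt{n})\Rightarrow N(0,1)$.

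First I would apply the martingale central limit theorem to the forward martingale $M_{n}^{d}=\sum_{i=1}^{n}D_{i}$, whose increments form a stationary martingale-difference sequence adapted to $(\mathscr{F}_{i})$ by Lemma~\ref{MartingaleConv}. Two ingredients are required: convergence of the normalized quadratic variation $\frac{1}{n}\sum_{i=1}^{n}D_{i}^{2}$ to the constant $\mathbb{E}D_{1}^{2}$, which follows from the ergodic theorem, and a Lindeberg-type negligibility condition $\frac{1}{n}\sum_{i=1}^{n}\mathbb{E}(D_{i}^{2}\mathbf{1}_{\{|D_{i}|>\varepsilon\sqrt{n}\}})\to 0$. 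The negligibility condition is precisely where the uniform integrability of the supremum of the averaged sums of squares of martingale differences highlighted in this paper enters; for a stationary square-integrable difference sequence it delivers the Lindeberg condition. Reversibility guarantees that the time-reversed chain has the same law, so the reversed martingale $M_{n}^{r}$ satisfies the analogous central limit theorem with the same individual variance.

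The step I expect to be the main obstacle is the joint treatment of $M_{n}^{d}$ and $M_{n}^{r}$: being built from the same chain and adapted to the direct and reversed filtrations respectively, they are strongly correlated, so their limits cannot simply be added. I would handle the sum $M_{n}^{d}+M_{n}^{r}$ directly by verifying the quadratic-variation and Lindeberg conditions for the merged sequence of increments rather than for each martingale in isolation, using the reversibility symmetry that relates $\tilde{D}_{i}$ to the reversed-time increments to reduce these verifications to the single stationary difference sequence already controlled above. The limiting variance then requires no delicate covariance computation: from the decomposition, $M_{n}^{d}+M_{n}^{r}=2S_{n}-(X_{n}-X_{0})$, so $\frac{1}{n}Var(M_{n}^{d}+M_{n}^{r})\to 4\sigma^{2}$, while Lemma~\ref{Kipeq} identifies $\sigma^{2}=\int_{-1}^{1}\frac{1+t}{1-t}\rho(dt)$. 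Assembling these, the combined martingale central limit theorem gives $(M_{n}^{d}+M_{n}^{r})/\sqrt{n}\Rightarrow N(0,4\sigma^{2})$, and Slutsky's theorem together with the negligibility of $(X_{n}-X_{0})/\sqrt{n}$ yields $S_{n}/(\sigma\sqrt{n})\Rightarrow N(0,1)$.
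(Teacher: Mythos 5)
Your route is genuinely different from the paper's, and it contains a gap at exactly the point you flag as the main obstacle. The paper's own proof is a two-line verification of the hypotheses of Heyde's theorem (Theorem \ref{Hey}): condition (\ref{cond2}) is the assumed asymptotic linearity of the variance, and condition (\ref{cond1}) is supplied by Proposition \ref{convL2}. No martingale CLT is applied directly, and the forward-backward decomposition of Proposition \ref{FBMD} is reserved for the tightness argument in Theorem \ref{KV1}, not for the convergence of the one-dimensional distributions.

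The gap in your argument is the joint treatment of $M_{n}^{d}$ and $M_{n}^{r}$. You correctly observe that the two martingales are strongly dependent and that their limits cannot be added, but your proposed remedy --- verifying the quadratic-variation and Lindeberg conditions for the ``merged sequence of increments'' --- does not produce a valid application of any martingale CLT. The increment $D_{i+1}$ satisfies $\mathbb{E}(D_{i+1}\mid\mathscr{F}_{i})=0$ while $\tilde{D}_{i}$ is centered only with respect to the \emph{future} $\sigma$-field; the merged increment $D_{i+1}+\tilde{D}_{i}$ is therefore not a martingale difference with respect to any single filtration, so the quadratic-variation and Lindeberg conditions, even if verified, give you nothing. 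Worse, by (\ref{Xk1}) and the vanishing of the $B_{n,k}$ terms, the merged increment is exactly $X_{i}+X_{i+1}$, so ``proving a CLT for the merged sequence'' is precisely the original problem restated --- the argument is circular. To make a forward-backward decomposition yield the CLT one needs an additional tool that handles the sum of a forward and a reversed martingale (this is essentially what Heyde's theorem, or the Kipnis--Varadhan resolvent argument, accomplishes); the decomposition alone does not suffice. The remaining ingredients of your write-up (Slutsky for $(X_{n}-X_{0})/\sqrt{n}$, the variance identification $\tfrac{1}{n}Var(M_{n}^{d}+M_{n}^{r})\to 4\sigma^{2}$, the ergodic theorem for $\tfrac{1}{n}\sum D_{i}^{2}$) are fine, but they do not close this central hole.
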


Theorem \ref{HRevCLT} was stated and proved by Kipnis and Varadhan (1986). Here, we provide a different proof of this theorem based on the following result of Heyde (1974):

\begin{theorem}[Heyde]\label{Hey}
\quad

Let $(\xi_{i}, i\in\mathbb{Z})$ be a stationary and ergodic mean zero sequence of random variables with finite second moments defined by (\ref{defcsi}). Assume that the following two conditions hold:
\begin{eqnarray}
\sum_{i=0}^{n}(\mathbb{E}(\xi_{i}|\mathscr{F}_{1})-\mathbb{E}(\xi_{i}|\mathscr{F}_{0})) \quad \mbox{converges in}\quad \mathbb{L}^{2}, \label{cond1}\\
var(S_{n})/n \to \sigma^{2}= \mathbb{E}(\sum_{i=1}^{\infty}(\mathbb{E}(\xi_{i}|\mathscr{F}_{1})-\mathbb{E}(\xi_{i}|\mathscr{F}_{0})))^2, \label{cond2}
\end{eqnarray} 
where $\mathscr{F}_{i}$ is the $\sigma$-field generated by $(\gamma_{j}, j\leq i)$. Then, $\displaystyle n^{-1/2}S_{n} \Longrightarrow N(0, \sigma^{2})$.
\end{theorem}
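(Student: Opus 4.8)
The plan is to prove the central limit theorem by martingale approximation: condition (\ref{cond1}) will be used to manufacture a stationary ergodic martingale difference sequence whose partial sums $M_n$ shadow $S_n$, while condition (\ref{cond2}) pins down the limiting variance and controls the residual. Once the approximation is in place, the conclusion follows from the classical central limit theorem for stationary ergodic martingale differences (Billingsley, Ibragimov), which asserts that a stationary ergodic martingale difference sequence $(D_k,\mathscr{F}_k)$ with $\mathbb{E}(D_1^2)<\infty$ satisfies $n^{-1/2}\sum_{k=1}^{n}D_k\Longrightarrow N(0,\mathbb{E}(D_1^2))$.

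First I would construct the martingale. By (\ref{cond1}) the limit $D_1=\lim_{n}\sum_{i=0}^{n}(\mathbb{E}_1 X_i-\mathbb{E}_0 X_i)$ exists in $\mathbb{L}^2$. Since $\mathbb{E}_0(\mathbb{E}_1 X_i-\mathbb{E}_0 X_i)=0$ for every $i$, the variable $D_1$ is $\mathscr{F}_1$-measurable with $\mathbb{E}(D_1|\mathscr{F}_0)=0$; shifting in time produces a stationary sequence $(D_k)$ that is a martingale difference for $(\mathscr{F}_k)$ and is ergodic, being a fixed measurable functional of the ergodic sequence. It is convenient to work with the truncations $D_k^{(m)}=\sum_{i=0}^{m}(\mathbb{E}_k X_{k+i}-\mathbb{E}_{k-1}X_{k+i})$, each of which is itself a stationary ergodic martingale difference with $D_k^{(m)}\to D_k$ in $\mathbb{L}^2$ as $m\to\infty$. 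The martingale central limit theorem then applies to $M_n^{(m)}=\sum_{k=1}^{n}D_k^{(m)}$, giving $n^{-1/2}M_n^{(m)}\Longrightarrow N(0,s_m^2)$ with $s_m^2=\mathbb{E}((D_1^{(m)})^2)\to s^2:=\mathbb{E}(D_1^2)$.

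Next I would quantify how well $M_n^{(m)}$ approximates $S_n$. Reorganizing the double sum defining $M_n^{(m)}$ and telescoping the inner conditional expectations yields, for $m\ge n$, the identity
\[
S_n-M_n^{(m)}=\mathbb{E}_0 S_m-\mathbb{E}_n(S_{n+m}-S_n)+\sum_{l=0}^{n-1}\mathbb{E}_l X_{l+m+1}.
\]
The first two terms form a coboundary whose $\mathbb{L}^2$-norm is at most $2\|\mathbb{E}_0 S_m\|_2$ by stationarity, so for each fixed $m$ it contributes $o(\sqrt n)$ as $n\to\infty$. The heart of the matter is the far-prediction remainder $T_{m,n}=\sum_{l=0}^{n-1}\mathbb{E}_l X_{l+m+1}$. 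Writing each $\mathbb{E}_l X_{l+m+1}$ as a sum of orthogonal backward increments and using stationarity, its individual $\mathbb{L}^2$-norm equals $\big(\sum_{s\ge m+2}\|\mathbb{E}_1 X_s-\mathbb{E}_0 X_s\|_2^2\big)^{1/2}$, which tends to $0$ as $m\to\infty$; reverse martingale convergence, together with the vanishing of the tail conditional expectation $\mathbb{E}(X_0|\cap_k\mathscr{F}_{-k})$, justifies this representation. I would then show that $\Sigma_m^2:=\limsup_n n^{-1}\mathbb{E}(T_{m,n}^2)\to 0$ as $m\to\infty$.

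I expect this last estimate to be the main obstacle: controlling the normalized variance of the partial sums of the stationary sequence $(\mathbb{E}_l X_{l+m+1})_l$ is not delivered by (\ref{cond1}) alone, since that condition only yields decay of the individual increments $\|\mathbb{E}_1 X_s-\mathbb{E}_0 X_s\|_2$, not summability of their covariances, so the crude bound on $\Sigma_m^2$ diverges. This is exactly the point at which the asymptotic linearity of the variance, condition (\ref{cond2}), must be brought in to tame the long-range part of $T_{m,n}$. Granting $\Sigma_m^2\to 0$, the triangle inequality combined with $n^{-1/2}\|M_n^{(m)}-M_n\|_2=\|D_1^{(m)}-D_1\|_2$ gives the full $\mathbb{L}^2$ approximation $\limsup_n n^{-1/2}\|S_n-M_n\|_2=0$; by Slutsky's theorem $n^{-1/2}S_n$ and $n^{-1/2}M_n$ have the same weak limit $N(0,s^2)$, and the same $\mathbb{L}^2$ approximation together with $n^{-1}\mathbb{E}(M_n^2)=\mathbb{E}(D_1^2)=s^2$ forces $s^2=\sigma^2$ through (\ref{cond2}). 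This yields $n^{-1/2}S_n\Longrightarrow N(0,\sigma^2)$, as claimed. Alternatively, Billingsley's approximation theorem can replace the last step, using $n^{-1/2}M_n^{(m)}\Longrightarrow N(0,s_m^2)$, $N(0,s_m^2)\Longrightarrow N(0,s^2)$, and $\limsup_n \mathbb{P}(n^{-1/2}|S_n-M_n^{(m)}|>\varepsilon)\to 0$ as $m\to\infty$.
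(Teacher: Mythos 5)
First, for calibration: the paper itself contains no proof of this statement --- it is imported verbatim from Heyde (1974) and used as a black box in the proof of Theorem \ref{HRevCLT} --- so your attempt has to be measured against the theorem itself rather than against an argument in the text. Much of your setup is sound: the limit $D_1$ is indeed a stationary $\mathbb{L}^2$ martingale difference, your telescoping identity $S_n-M_n^{(m)}=\mathbb{E}_0S_m-\mathbb{E}_n(S_{n+m}-S_n)+\sum_{l=0}^{n-1}\mathbb{E}_lX_{l+m+1}$ checks out, and the Billingsley--Ibragimov CLT for stationary ergodic martingale differences applies to $M_n^{(m)}$. But the proof collapses exactly where you write ``granting $\Sigma_m^2\to 0$'': that estimate is the entire content of the theorem, and you neither prove it nor indicate concretely how (\ref{cond2}) would deliver it. An argument whose decisive step is conceded is not a proof, and here everything else is routine bookkeeping.

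Worse, the step cannot be filled in as you have set it up, because your intermediate claims are actually false under the stated hypotheses. Take a stationary Gaussian process whose spectral measure is purely singular continuous with no atoms (so the process is ergodic) but with finite positive symmetric derivative at $\lambda=0$ --- for instance, place suitably scaled Cantor measures on thin annuli $\rho_{k+1}<|\lambda|\le\rho_k$ with $\rho_{k+1}/\rho_k\to 1$. Singularity of the spectral measure makes the process deterministic, so $\mathscr{F}_0=\mathscr{F}_1$ and (\ref{cond1}) holds trivially with $D_1=0$, while the derivative condition gives $Var(S_n)/n\to\sigma^2\neq 0$, so (\ref{cond2}) holds as well. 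Yet $M_n\equiv 0$, so $n^{-1}\mathbb{E}(S_n-M_n)^2\to\sigma^2$, your claimed approximation $\limsup_n n^{-1/2}\lVert S_n-M_n\rVert_2=0$ fails, $\Sigma_m^2=\sigma^2$ for every $m$, and $\mathbb{E}(D_1^2)=0\neq\sigma^2$ (the CLT conclusion survives only because the process happens to be Gaussian). The same example exposes a hidden hypothesis in your remainder computation: the identity $\lVert\mathbb{E}_lX_{l+m+1}\rVert_2^2=\sum_{s\ge m+2}\lVert\mathbb{E}_1X_s-\mathbb{E}_0X_s\rVert_2^2$ requires $\mathbb{E}(X_0\mid\bigcap_k\mathscr{F}_{-k})=0$, a regularity assumption that does not follow from ergodicity and is not among the hypotheses as quoted. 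So $\sigma^2$ need not equal $\mathbb{E}(D_1^2)$ under (\ref{cond1})--(\ref{cond2}) alone, and no proof that runs through an $o(\sqrt{n})$ martingale approximation in $\mathbb{L}^2$ can succeed; the remote-past component must be handled by a different mechanism. Before attempting a repair, you should consult Heyde's original formulation (and the later literature on martingale approximation, e.g.\ Voln\'y's work), where precisely this regularity issue is what makes the argument delicate.
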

\bigskip

\begin{proof}
To prove Theorem \ref{HRevCLT}, we shall verify the assumptions of Theorem \ref{Hey}. The assumption (\ref{cond2}) is partially common to both theorems. Notice that 

$$\displaystyle \mathbb{E}(\sum_{i=1}^{n}(\mathbb{E}(\xi_{i}|\mathscr{F}_{1})-\mathbb{E}(\xi_{i}|\mathscr{F}_{0})))^2=\sum_{j=1}^{n}\sum_{i=1}^{n}\mathbb{E}\Big((\mathbb{E}(\xi_{i}|\mathscr{F}_{1})-\mathbb{E}(\xi_{i}|\mathscr{F}_{0}))(\mathbb{E}(\xi_{j}|\mathscr{F}_{1})-\mathbb{E}(\xi_{j}|\mathscr{F}_{0}))\Big)= $$

$$=\displaystyle \sum_{j=1}^{n}\sum_{i=1}^{n}\Big( \mathbb{E}(\mathbb{E}(\xi_{i}|\mathscr{F}_{1})\mathbb{E}(\xi_{j}|\mathscr{F}_{1}))-\mathbb{E}(\mathbb{E}(\xi_{i}|\mathscr{F}_{0})\mathbb{E}(\xi_{j}|\mathscr{F}_{0}))\Big)=A.$$ This equality uses the fact that $\mathscr{F}_{0}\subset\mathscr{F}_{1}$ leads to $\displaystyle \mathbb{E}(\mathbb{E}(\xi_{i}|\mathscr{F}_{0})\mathbb{E}(\xi_{j}|\mathscr{F}_{1}))=\mathbb{E}(\mathbb{E}(\xi_{i}|\mathscr{F}_{0})\mathbb{E}(\xi_{j}|\mathscr{F}_{0}))$. 
Using stationarity and reversibility, we obtain 
$$\displaystyle A =\displaystyle \sum_{j=1}^{n}\sum_{i=1}^{n}\Big( \mathbb{E}(\mathbb{E}(\xi_{i-1}|\mathscr{F}_{0})\mathbb{E}(\xi_{j-1}|\mathscr{F}_{0}))-\mathbb{E}(\mathbb{E}(\xi_{i}|\mathscr{F}_{0})\mathbb{E}(\xi_{j}|\mathscr{F}_{0}))\Big)=$$

$$\displaystyle =\sum_{j=1}^{n}\sum_{i=1}^{n}\int_{-1}^{1}(t^{i+j-2}-t^{i+j})\rho(dt)=\int_{-1}^{1}\frac{(1+t)(1-t^n)^2 }{1-t}\rho(dt).$$ Thus, by Lemma \ref{Kipeq}, $$\displaystyle \mathbb{E}(\sum_{i=1}^{\infty}(\mathbb{E}(\xi_{i}|\mathscr{F}_{1})-\mathbb{E}(\xi_{i}|\mathscr{F}_{0})))^2=\int_{-1}^{1}\frac{1+t}{1-t}\rho(dt)=\lim n^{-1}var(S_n ).$$ 

Assumption (\ref{cond1}) follows from Proposition \ref{convL2}. Thus, the conclusion holds.
\end{proof}

Note that the only assumption of reversibility in Theorem \ref{HRevCLT} drops all assumptions on mixing rates that are usually imposed on the Markov chain. Proposition 1 of Dedecker and Rio (2000), reformulated for stationary martingales reads as follows.

\begin{proposition} \label{Dedthight}
\quad 

Let $(D_{i}, i\in \mathbb{Z})$ be a stationary sequence of martingale differences or reversed martingale differences. Let $S_{n}$ be the partial sums of any of the sequences. Let $\lambda$ be a nonnegative real number and $\Gamma_{k}=(S^{*}_{k}>\lambda)$, where $\displaystyle S^{*}_{k}=\max_{1\leq i\leq k}(0, S_{1},\cdots,S_{k})$. Then,
\begin{equation}
\mathbb{E}((S_{n}^{*}-\lambda)^{2}_{+})\leq 4\sum_{i=1}^{n}\mathbb{E}(D^{2}_{k}\mathbb{I}_{\Gamma_{k}}), \label{uint}
\end{equation}
and $\displaystyle n^{-1}\max_{1\leq i\leq n}S^{2}_{i} \quad \mbox{is uniformly integrable.}$
\end{proposition}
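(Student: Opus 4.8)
The plan is to treat the two assertions separately, leaning on Dedecker and Rio for the maximal inequality (\ref{uint}) and doing the real work to extract uniform integrability from it. For (\ref{uint}) itself I would reproduce the Dedecker--Rio argument organized around the first-passage time $\tau=\inf\{k\ge1:S_k>\lambda\}$, observing that $\Gamma_k=\{S_k^{*}>\lambda\}=\{\tau\le k\}$ is \emph{increasing} in $k$. Telescoping $(S_n^{*}-\lambda)_+^{2}=\sum_{k=1}^{n}[(S_k^{*}-\lambda)_+^{2}-(S_{k-1}^{*}-\lambda)_+^{2}]$, one uses the identity $S_k^{*}-S_{k-1}^{*}=(S_k-S_{k-1}^{*})_+\le (D_k)_+$ to control the squared increments by $D_k^{2}\mathbb{I}_{\Gamma_k}$, while the first-order terms are handled by Doob's $\mathbb{L}^{2}$ maximal inequality applied to the post-$\tau$ martingale $\{S_j-S_\tau\}_{j\ge\tau}$. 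The delicate point here is the bookkeeping that produces the sharp constant $4$ and the \emph{diagonal} sum $\sum_k\mathbb{E}(D_k^{2}\mathbb{I}_{\Gamma_k})$ rather than the square of a sum of absolute increments; I would simply cite Dedecker and Rio (2000) for this, since the statement is billed as a reformulation of their Proposition 1.

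Granting (\ref{uint}), I would derive the uniform integrability as follows. First reduce to the one-sided maximum: since $-D_i$ is again a stationary martingale difference sequence, $\max_{1\le i\le n}|S_i|\le S_n^{*}+\tilde S_n^{*}$ (the tilde quantity built from $-D_i$), so $\max_{1\le i\le n}S_i^{2}\le 2(S_n^{*})^{2}+2(\tilde S_n^{*})^{2}$ and it suffices to show $\{n^{-1}(S_n^{*})^{2}\}_n$ is uniformly integrable. Fix $c>0$ and set $\lambda=\tfrac12\sqrt{cn}$. On $\{(S_n^{*})^{2}>cn\}=\{S_n^{*}>2\lambda\}$ one has $(S_n^{*}-\lambda)_+>\tfrac12 S_n^{*}$, hence $(S_n^{*})^{2}\mathbb{I}_{(S_n^{*})^{2}>cn}\le 4(S_n^{*}-\lambda)_+^{2}$. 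Applying (\ref{uint}) and then $\Gamma_k\subset\Gamma_n$ gives $\mathbb{E}\big((S_n^{*})^{2}\mathbb{I}_{(S_n^{*})^{2}>cn}\big)\le 16\sum_k\mathbb{E}(D_k^{2}\mathbb{I}_{\Gamma_k})\le 16\,\mathbb{E}\big(\mathbb{I}_{\{S_n^{*}>\lambda\}}\sum_{k=1}^{n}D_k^{2}\big)$, so that $\mathbb{E}\big(n^{-1}(S_n^{*})^{2}\,\mathbb{I}_{n^{-1}(S_n^{*})^{2}>c}\big)\le 16\,\mathbb{E}\big(\mathbb{I}_{\{S_n^{*}>\lambda\}}\,n^{-1}\sum_{k=1}^{n}D_k^{2}\big)$.

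Two facts then close the argument. By orthogonality of the martingale differences, $\mathbb{E}(S_n^{2})=n\,\mathbb{E}(D_1^{2})$, so Doob's weak-type maximal inequality yields $\mathbb{P}(S_n^{*}>\lambda)\le \lambda^{-2}\mathbb{E}(S_n^{2})=4\mathbb{E}(D_1^{2})/c$, which tends to $0$ as $c\to\infty$ \emph{uniformly in} $n$. Moreover $\{n^{-1}\sum_{k=1}^{n}D_k^{2}\}_n$ is itself uniformly integrable: by the $\mathbb{L}^{1}$ ergodic theorem for stationary sequences it converges in $\mathbb{L}^{1}$ (to $\mathbb{E}(D_1^{2}\mid\mathcal{I})$), and $\mathbb{L}^{1}$-convergent families are uniformly integrable, equivalently uniformly absolutely continuous. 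Given $\varepsilon>0$, choose $\delta$ from this uniform absolute continuity and then $c$ so large that $\sup_n\mathbb{P}(S_n^{*}>\lambda)<\delta$; the displayed bound is then $<16\cdot(\varepsilon/16)=\varepsilon$ for every $n$, so $\lim_{c\to\infty}\sup_n\mathbb{E}\big(n^{-1}(S_n^{*})^{2}\mathbb{I}_{n^{-1}(S_n^{*})^{2}>c}\big)=0$, and combining with the tilde term gives the uniform integrability of $n^{-1}\max_{1\le i\le n}S_i^{2}$.

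The main obstacle, apart from the sharp form of (\ref{uint}) which I would not re-prove, is the decoupling in the last step: one must control the tail of the running maximum \emph{and} the average quadratic variation simultaneously and uniformly in $n$. The calibration $\lambda\asymp\sqrt{cn}$ is exactly what makes this work, since it forces $\mathbb{P}(S_n^{*}>\lambda)$ small (first fact) and lets the uniform integrability of $n^{-1}\sum_k D_k^{2}$ (second fact) convert that small probability into a uniformly small contribution; getting both to hold for the \emph{same} $\lambda$ and for all $n$ at once is the crux.
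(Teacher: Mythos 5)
Your proposal is correct and takes essentially the same route as the paper: the maximal inequality (\ref{uint}) is cited from Dedecker and Rio (2000), and uniform integrability is deduced by applying it to both $(D_i)$ and $(-D_i)$ with the truncation level $\lambda$ calibrated proportionally to $\sqrt{n}$. The only minor divergence is at the end: you bound $\sum_k\mathbb{E}(D_k^{2}\mathbb{I}_{\Gamma_k})\le\mathbb{E}\big(\mathbb{I}_{\Gamma_n}\sum_k D_k^{2}\big)$ and invoke the $\mathbb{L}^{1}$ ergodic theorem for the uniform integrability of $n^{-1}\sum_k D_k^{2}$, whereas the paper bounds each term $\mathbb{E}(D_k^{2}\mathbb{I}_{\Gamma_k})$ separately, using only that the $D_k^{2}$ are identically distributed (hence uniformly integrable) and that $\mathbb{P}(\Gamma_k)$ is uniformly small in $k\le n$ for large $\lambda$ --- a slightly more elementary device, though your version handles the uniformity in $k$ more transparently.
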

The proof of the first part of the conclusion of this proposition can be found in Dedecker and Rio (2000). The second part concerning uniform integrability follows from the inequality (\ref{uint}). 
Denoting $\displaystyle M^{*}_{n}=\max_{1\leq i\leq n}|S_{i}|$, from inequality (\ref{uint}) applied to ($D_{i}$) and ($-D_{i}$), we have
$$\displaystyle \displaystyle n^{-1}\mathbb{E}((M_{n}^{*}-\lambda)^{2}_{+})\leq 8n^{-1}\sum_{i=1}^{n}\mathbb{E}(D^{2}_{k}\mathbb{I}_{\Gamma_{k}}).$$
Using stationarity, we obtain $$ \displaystyle n^{-1}\mathbb{E}((M_{n}^{*}-\lambda)^{2}_{+})\leq 8n^{-1}\sum_{i=1}^{n}\mathbb{E}D^{2}_{k}=8\mathbb{E}D_{0}^{2}.$$ Thus, taking $\lambda=0,$ we get $\displaystyle n^{-1}\max_{1\leq i\leq n}S^{2}_{i} \quad \mbox{is uniformly bounded in}\quad \mathbb{L}^{1}.$ 

The proof of uniform integrability of $\displaystyle n^{-1}\max_{1\leq i\leq n}S^{2}_{i}$ reduces to showing that $$\displaystyle\limsup_{n}\int_{A}n^{-1}\max_{1\leq i\leq n}S^{2}_{i}dP \to 0 \quad \mbox{as}\quad \mathbb{P}(A)\to 0.$$
This convergence follows is proved below.
$$\displaystyle\int_{A}n^{-1}\max_{1\leq i\leq n}S^{2}_{i}dP\leq 2\int_{A}n^{-1}((\max_{1\leq i\leq n}|S_{i}|-\lambda\sqrt{n})^{2}+(\lambda\sqrt{n})^{2})dP=$$ $$=2\int_{A}n^{-1}(\max_{1\leq i\leq n}|S_{i}|-\lambda\sqrt{n})^{2}dP+2\lambda^{2}\mathbb{P}(A).$$
$$\mbox{So,}\quad \int_{A}n^{-1}\max_{1\leq i\leq n}S^{2}_{i}dP\leq 2n^{-1}\mathbb{E}(M_{n}^{*}-\lambda\sqrt{n})^{2}_{+}+2\lambda^{2}\mathbb{P}(A)\leq 16n^{-1}\sum_{i=1}^{n}\mathbb{E}(D^{2}_{k}\mathbb{I}_{(S_{k}>\lambda\sqrt{n})})+2\lambda^{2}\mathbb{P}(A).$$
Due to stationary of the martingale differences $(D_{i}, i\in \mathbb{N})$, the sequence $(D_{i}^{2}, 1\leq i\leq n)$ is uniformly integrable. Therefore, for all $\varepsilon$ and for all $k$, there exists $n$ such that $\mathbb{E}(D^{2}_{k}\mathbb{I}_{S_{k}>\lambda\sqrt{n}})<\varepsilon$.
Thus, 
$$\displaystyle\int_{A}n^{-1}\max_{1\leq i\leq n}S^{2}_{i}dP\leq 16\varepsilon+2\lambda^{2}\mathbb{P}(A).$$
Finally, we obtain $$\displaystyle\lim_{\mathbb{P}(A)\to 0}\limsup_{n}\int_{A}n^{-1}\max_{1\leq i\leq n}S^{2}_{i}dP\leq 16\varepsilon.$$ 
Taking $\varepsilon \to 0$ leads to the conclusion of the proposition. Similar calculations provide the proof for the case of the reversed martingale differences.

Now we are ready to propose a new proof of Corollary 1.5 of Kipnis and Varadhan (1986).

\begin{theorem}[Kipnis, Varadhan] \label{KV1}
\quad

For any reversible stationary Markov chain $(\gamma_{j}, j\in\mathbb{Z})$ defined on a space $\mathcal{X}$, and for any mean zero function $g$ satisfying the following conditions:
\begin{enumerate}
\item $\int g^{2}(x)\pi(dx)< \infty,$
\item $\displaystyle
\lim_{n\rightarrow \infty} \frac{1}{n}E(g(\gamma_{1}) + \cdots + g(\gamma_{n}))^{2} = \sigma_g^2 < \infty,$
\end{enumerate}
the reversible Markov chain defined by $\xi_i=g(\gamma_i)$ satisfies,
$\displaystyle
\frac{S_{[nt]}}{\sqrt{n}}\Longrightarrow|\sigma_{g}|W(t)\text{.} \label{IP}%
$
\end{theorem}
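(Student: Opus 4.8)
The plan is to establish weak convergence of the rescaled partial-sum process $S_{[nt]}/\sqrt{n}$ to $|\sigma_f|\,W(t)$ in the Skorokhod space $D[0,1]$ by verifying the two classical ingredients of functional weak convergence: convergence of the finite-dimensional distributions and tightness. The limiting variance is identified through Lemma \ref{Kipeq}, which gives $\sigma^{2}=\int_{-1}^{1}\frac{1+t}{1-t}\rho(dt)=\sigma_{f}^{2}<\infty$, so that the one-dimensional marginal of the limit at time $t$ must be $N(0,\sigma^{2}t)$. By stationarity, Theorem \ref{HRevCLT} already yields $S_{[nt]}/\sqrt{n}\Rightarrow N(0,\sigma^{2}t)$ for each fixed $t$, since $Var(S_{[nt]})/[nt]\to\sigma^{2}$ and $\sqrt{[nt]/n}\to\sqrt{t}$.

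I would first dispose of tightness, which is essentially packaged in the material following Proposition \ref{Dedthight}. The maximal inequality (\ref{uint}) controls the increments of the process over short time intervals, and it yields the uniform integrability of $n^{-1}\max_{1\le i\le n}S_{i}^{2}$ established there. Together with the fixed-time convergence above, this controls the modulus of continuity of the paths and prevents any mass from escaping onto discontinuous limits, giving tightness of $\{S_{[n\cdot]}/\sqrt{n}\}$ in $D[0,1]$ with the limit supported on continuous paths.

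The core of the work is the finite-dimensional convergence, for which I would lean on the martingale approximation already constructed in Section 2. By Proposition \ref{convL2} condition (\ref{cond1}) holds, so the $\mathbb{L}^{2}$-limit $D_{1}$ of $\mathbb{E}(S_{n}\mid\mathscr{F}_{1})-\mathbb{E}(S_{n}\mid\mathscr{F}_{0})$ exists, and by Lemma \ref{MartingaleConv} its stationary shifts $(D_{k})$ form a stationary ergodic martingale-difference sequence with $\mathbb{E}D_{0}^{2}=\sigma^{2}$. Writing $M_{n}=\sum_{i=1}^{n}D_{i}$, one gets $S_{n}=M_{n}+R_{n}$ with $n^{-1/2}\max_{i\le n}|R_{i}|\to 0$ in probability, the boundary contributions such as $X_{n}-X_{0}$ in (\ref{Martdec}) being negligible after rescaling because $\mathbb{E}X_{0}^{2}<\infty$ forces $\max_{i\le n}|X_{i}|=o(\sqrt{n})$ in probability. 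For the stationary ergodic martingale $M$, the functional martingale central limit theorem applies: the ergodic theorem gives $n^{-1}\sum_{i=1}^{[nt]}\mathbb{E}(D_{i}^{2}\mid\mathscr{F}_{i-1})\to t\,\mathbb{E}D_{0}^{2}$ almost surely, whence $M_{[nt]}/\sqrt{n}\Rightarrow\sigma W(t)$; since $M$ has orthogonal increments, its increments over disjoint blocks are asymptotically independent Gaussian, which is exactly the finite-dimensional statement. Transferring this through the negligible remainder $R$ yields finite-dimensional convergence of $S_{[n\cdot]}/\sqrt{n}$, and combining with tightness gives the functional CLT.

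I expect the main obstacle to be the clean reduction to a single martingale. The forward-backward decomposition (\ref{Martdec}) produces a forward martingale $M^{d}$ together with a reversed martingale $M^{r}$, and the sum of a direct and a reversed martingale is not itself a martingale, so one cannot apply the functional martingale CLT to it verbatim. One must therefore either route the argument through the single forward martingale $M$ coming from condition (\ref{cond1}), as above, or establish the joint two-dimensional invariance principle for $(M^{d},M^{r})$ and check that the variance of $\tfrac12(M^{d}+M^{r})$ matches $\sigma^{2}$. Secondary technical points are the uniform-in-block control of the approximation error $R$ needed to transport finite-dimensional convergence from $M$ to $S$, and the ergodicity required so that the limiting clock $t\,\mathbb{E}D_{0}^{2}$ is deterministic rather than a random mixture; once these are settled, finite-dimensional convergence and tightness together deliver $S_{[nt]}/\sqrt{n}\Rightarrow|\sigma_{f}|\,W(t)$.
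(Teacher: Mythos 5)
Your treatment of tightness is essentially the paper's own: both reduce the problem via Billingsley's maximal criterion to the uniform integrability of $n^{-1}\max_{1\leq i\leq n}S_{i}^{2}$, and both obtain that from the forward--backward decomposition (\ref{Martdec}) together with Proposition \ref{Dedthight} applied to each of the two martingales, with stationarity handling the $X_{n}-X_{0}$ term. Where you diverge is on the finite-dimensional distributions: the paper simply defers this step to Theorem 1 of Longla et al.\ (2012), whereas you sketch the argument, routing it through the \emph{single} forward martingale supplied by condition (\ref{cond1}) and the functional CLT for stationary ergodic martingale differences. That is the right move, and your diagnosis of the main obstacle is accurate: $\tfrac12(M^{d}_{n}+M^{r}_{n})$ is not a martingale, so the martingale CLT cannot be applied to the forward--backward decomposition verbatim; the decomposition earns its keep only for the maximal inequality, while the distributional limit must come from Heyde's construction. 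This division of labor --- one decomposition for tightness, another for the marginals --- is exactly how the argument is organized in the literature the paper cites.

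Two imprecisions in your sketch are worth flagging. First, you assert $n^{-1/2}\max_{i\leq n}|R_{i}|\to 0$ in probability for the remainder $R_{n}=S_{n}-M_{n}$; Heyde's conditions only deliver $\|S_{n}-M_{n}\|_{2}=o(\sqrt{n})$ at each fixed time, and the maximal version is neither justified nor needed, since tightness is already secured separately --- fixed-time negligibility suffices to transfer finite-dimensional convergence from $M$ to $S$. Relatedly, the remainder relative to $M^{d}$ alone is $\tfrac12(M^{r}_{n}-M^{d}_{n})+\tfrac12(X_{n}-X_{0})$, whose first summand is generally \emph{not} $o(\sqrt{n})$ in $\mathbb{L}^{2}$; so the single-martingale approximation must be built directly from condition (\ref{cond1}), not read off from (\ref{Martdec}), as you half-acknowledge. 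Second, your point about ergodicity is well taken and is in fact a gap shared with the paper: Theorem \ref{Hey} assumes ergodicity, but neither Theorem \ref{HRevCLT} nor Theorem \ref{KV1} states it, and without it the limiting quadratic variation $t\,\mathbb{E}(D_{0}^{2}\mid\mathcal{I})$ need not be deterministic. Settling that (or restricting to the ergodic case) is required to make either proof complete.
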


\begin{proof}
To prove Theorem \ref{KV1}, we need to show tightness of $W_{n}(t)=S_{[nt]}/\sqrt{n}$. This means, show that $$\forall \varepsilon>0,
\quad \displaystyle\lim_{\delta\to 0}\limsup_{n\to \infty}\mathbb{P}(\sup_{|s-t|<\delta}|W_{n}(t)-W_{n}(s)|>\varepsilon)=0.$$ Convergence of finite dimensional distributions repeats the steps of Theorem 1 of Longla and al. (2012).
By Billingsley's Theorem 8.3 (1968) formulated for random elements of D (see page 137 or formula (8.16) in Billingsley, 1968) , taking into account stationarity of the process, this condition is satisfied if 
$$\displaystyle \lim_{\delta\to 0}\limsup_{n\to \infty}\frac{1}{\delta}\mathbb{P}(\max_{1\leq j\leq [n\delta]}|S_{j}|>\varepsilon\sqrt{n})=0.$$
Therefore, it is enough to have 
\begin{equation}
\frac{1}{n}\max_{1\leq j\leq n}S_{j}^{2} - \mbox{is uniformly integrable.} \label{tightness}
\end{equation}
The chain being reversible, we have by Proposition \ref{FBMD}:
\begin{equation}
2S_{n}=M^{d}_{n}+M^{r}_{n}+\xi_{n}-\xi_{0}, \label{decom}
\end{equation}
where $M^{d}_{n}=\sum_{i=1}^{n}D^{d}_{i}$ and $M^{r}_{n}=\sum_{i=1}^{n}D^{r}_{i}$ are respectively a direct and a reversed martingales. The sequences ($D^{d}_{i}$) and ($D_{i}^{r}$) are respectively stationary martingale differences and stationary reversed martingale differences.
Due to the representation (\ref{decom}), the condition (\ref{tightness}) is satisfied if \begin{eqnarray}
\frac{1}{n}\max_{1\leq i\leq n}(M^{s}_{i})^{2} \quad \mbox{is uniformly integrable for}\quad s=d,r, \label{condtight}\\
\mbox{and}\quad \frac{1}{n}\max_{1\leq i\leq n}(\xi_{i})^{2} \quad \mbox{is uniformly integrable} \label{condunif}.
\end{eqnarray} 
The condition (\ref{condtight}) is satisfied due to Proposition \ref{Dedthight} and (\ref{condunif}) is satisfied due to stationarity of the process ($\xi_{i},i\in \mathbb{Z}$). This concludes the proof of the theorem.
\end{proof}

\section{Linear functions of reversible processes }

\subsection{Overview}

We are interested in estimating regression parameters or functions and provide confidence intervals for this estimation. Deriving central limit theorems in this case is crucial because the lack of it would prevent any progress in solving the problem at hands. 
Let $(\xi_{i}, i\in{\mathbb{Z}})$ be a stationary sequence of random
variables on a probability space $(\Omega,\mathcal{K},\mathbb{P})$ with finite
second moments and zero means $(\mathbb{E}\xi_{0}=0)$. Let $(a_{i}, i\in{\mathbb{Z}})$ be a sequence of real numbers such that $\sum
\nolimits_{i\in{\mathbb{Z}}}a_{i}^{2}<\infty$ and denote by%
\begin{align}
X_{k} & =\sum_{j=-\infty}^{\infty}a_{k+j}\xi_{j}\;,\;S_{n}(X)=S_{n}%
=\sum_{k=1}^{n}X_{k},\quad\label{defXn}\\
b_{n,j} & =a_{j+1}+\ldots+a_{j+n}\quad\mbox{and}\quad b_{n}^{2}%
=\sum_{j=-\infty}^{\infty}b_{n,j}^{2}.\nonumber
\end{align}
The linear process $(X_{k}), k\in{\mathbb{Z}})$ is widely used in a variety of applied fields. It is properly defined for any square summable sequence $(a_{i}, i\in{\mathbb{Z}})$ if and only if the stationary sequence of innovations $(\xi_{i}, i\in{\mathbb{Z}})$ has a bounded spectral density. In general, the covariances of $(X_{k}, k\in{\mathbb{Z}})$ might not be summable.
Thus, the linear process might exhibit long-range dependence.
Peligrad (2012) showed the following theorem: 

\begin{theorem}
\label{cltlingenr} Assume that $(\xi_{j}, j\in{\mathbb{Z}})$ is defined by
(\ref{defcsi}) and $Q=Q^{\ast}$. Define $(X_{k}, k\in\mathbb{N})$, $S_{n}$ and $b_{n}$ as in
(\ref{defXn}). Assume that $b_{n}\rightarrow\infty$ as $n\rightarrow\infty$
and
\begin{equation}
\sum_{j\geq0}|\mathrm{cov}(\xi_{0},\xi_{j})|<\infty. \label{abscov}%
\end{equation}
Then, there is a non-negative random variable $\eta$ measurable with respect to $\mathcal{I}$ such that $n^{-1}\mathbb{E}((\sum_{k=1}^{n}\xi_{k}%
)^{2}|\mathcal{F}_{0})\rightarrow\eta$\ in $\mathbb{L}_{1}$ as $n\rightarrow
\infty$ and $\mathbb{E}\eta=\sigma_{g}^{2}.$ In addition
$$
\lim_{n\rightarrow\infty}\frac{\mathrm{var}(S_{n}(X))}{b_{n}^{2}}=\sigma
_{g}^{2}%
$$
and
\begin{equation}
\frac{S_{n}(X)}{b_{n}}\Longrightarrow\sqrt{\eta\ }N\text{ as }n\rightarrow\infty,
\label{CLT}%
\end{equation}
where $N$ is a standard normal variable independent on $\eta.$ Moreover if the
sequence $(\xi_{i}, i\in{\mathbb{Z}})$ is ergodic the central limit theorem in (\ref{CLT}) holds with $\eta=\sigma_{g}^{2}.$
\end{theorem}

She also mentioned that under the conditions of this theorem $\sigma_{g}^{2}$
also has the following interpretation: the stationary sequence $(\xi
_{i}, i\in{\mathbb{Z}})$ has a continuous spectral density $f(x)$ and
$\sigma_{g}^{2}=2\pi f(0).$

\subsection{A CLT for linear functions of reversible Markov chains}
Let
\begin{equation}
\Gamma_{j}=\sum_{k=0}^{\infty}|\mathbb{E(}\xi_{j+k}\mathbb{E}_{0}\xi
_{j})|<\infty\text{ and}\;\frac{1}{p}\sum_{j=1}^{p}\Gamma_{j}\rightarrow
0\text{ as}\;p\rightarrow\infty, \label{Mgen}%
\end{equation}
\begin{equation}
\sum_{i\in\mathbb{Z}}d_{n,i}^{2}\rightarrow c^{2}\,\text{and}\,\sum
_{j\in\mathbb{Z}}^{\ }(d_{n,j}-d_{n,j-1})^{2}\rightarrow0\,\text{as
}n\rightarrow\infty, \label{A}%
\end{equation}
and 
\begin{equation}
\sup_{j\in\mathbb{Z}}|d_{n,j}|\rightarrow0\,\ {\text{as}}\text{ \ }%
n\rightarrow\infty. \label{B}%
\end{equation} 
Define%
\begin{equation}
S_{n}=\sum_{i=1}^{n}d_{n,i}(\xi_{i}+\xi_{i+1}%
)\; \label{defX'}%
\end{equation}
Note that under assumptions of reversibility and $\displaystyle \frac{\sigma^2_n}{n} \to \sigma^2_{g}$, assumption (\ref{Mgen}) is satisfied. In fact, 

$$|\mathbb{E}(\xi_{j+k}\mathbb{E}_0 (\xi_j))|= |\int_{-1}^{1}t^{2j+k}\rho(dt)|\leq \int_{-1}^{1}|t|^{2j+k}\rho(dt).$$
Thus, $$\displaystyle \Gamma_j \leq \int_{-1}^{1}\sum_{k=0}^{\infty}t^{2j}|t|^{k}\rho(dt)=\int_{-1}^{1}\frac{t^{2j}}{1-|t|}\rho(dt)\leq \int_{-1}^{1}\frac{t^{2j}}{1-t}\rho(dt).$$
From this, it is clear that $\displaystyle \Gamma_j <\infty,$ $\displaystyle \lim_{j\to \infty}\Gamma_j =0$ and by the standard theorem on Cesaro means, 
$$\displaystyle \lim_{p\to \infty}\frac{1}{p}\sum_{j=1}^{p}\Gamma_j =0.$$
Therefore, we have the following:
\begin{theorem}
\label{Rev-CLT} Assume that $(\xi_{j}, j\in\mathbb{Z})$ defined by (\ref{defcsi}) is reversible, ergodic and $\displaystyle \frac{\sigma^2_n}{n} \to \sigma^2_{g}$.
Then, under assumptions (\ref{A}) and (\ref{B}), the CLT holds
in the forms $S_{n}\Longrightarrow N(0, \eta c^2)$ and $S_{[nt]}\Longrightarrow \sqrt{\eta}c W(t)$. In this case $\eta$ is defined as $n^{-1}\mathbb{E}(\sum_{k=1}^{n}(\xi_{k}+\xi_{k+1})^{2}|\mathcal{F}_{0})\rightarrow\eta$\ in $\mathbb{L}_{1}%
(\Omega,\mathcal{F},\mathbb{P})$ as $n\rightarrow\infty$. Furthermore, the stationary sequence $(\xi_{k}+\xi_{k+1})_{k\in{\mathbb{Z}}}$ has a continuous spectral density $h(x)$ and $\eta=2\pi h(0).$
\end{theorem}

A very important corollary to this theorem will follow from the fact, that 
$$S_{n}=\sum_{i=1}^{n}d_{n,i}(\xi_{i}+\xi_{i+1})=\sum_{i=1}^{n+1}\tilde{d}_{n,i}\xi_{i}, \quad \tilde{d}_{n,i}=d_{n,i}+d_{n,i-1}, \quad d_{n,0}=d_{n,n+1}=0.$$
Therefore, any sum of the form (\ref{defX'}) comes from a sum 
\begin{equation} \tilde{S}_{n}=\sum_{i=1}^{n}\tilde{d}_{n,i}\xi_{i}+d_{n,n}\xi_{n+1}, \quad d_{n,i}=\sum_{j=1}^{i}(-1)^{i+j}\tilde{d}_{n,j}. \label{DefS}
\end{equation}
Notice that for any stationary Markov chain and any sequence $d_{n,i}$ satisfying the conditions of the above theorem, we have $d_{n,n}\to 0$. So, we obtain the following.
\begin{theorem}
Under the conditions Theorem \ref{Rev-CLT}, with $d_{n,i}$ defined in (\ref{DefS}) , $\tilde{S}_{n} \Longrightarrow N(0, \eta c^{2})$ and $\tilde{S}_{[nt]} \Longrightarrow \sqrt{\eta}c W(t)$.
\end{theorem}

\section{Applications to statistical some models}
\subsection{The linear regression estimates}
Many statistical procedures, such as estimation of regression coefficients, produce linear statistics of type (\ref{defcsi}). For more information, see Chapter 9 in Beran (1994) for parametric regression or the paper by Robinson (1997) for nonparametric regression with use of kernel estimations.

\subsubsection{The linear regression problem without intercept}
We consider here the simple parametric regression model $Y_{i}=\beta X_{i}+\xi_{i}$, where the errors $\xi_{i}$ form a stationary reversible mean zero Markov chain, $X_{i}$ is a sequence of real-valued explanatory variables and $\beta$ is the parameter of interest. It is well-known that the least squares estimator of $\beta$ for a sample of size $n$ is $\hat{\beta}=\sum_{i=1}^{n}\alpha_{i}Y_{i}/\sum_{i=1}^{n}\alpha_{i}^{2}$. For this estimator, we have:
\begin{equation}
S_{n}:=\sqrt{\sum_{i=1}^{n}X_{i}^{2}}(\hat{\beta}-\beta)=\sum_{i=1}^{n}\tilde{d}_{n,i}\xi_{i},\quad \tilde{d}_{n,i}=\frac{X_{i}}{\sqrt{\sum_{i=1}^{n}X_{i}^{2}}}, \quad i \in \{1, \cdots, n\}
\label{defreg}
\end{equation}
On the other hand, the coefficients satisfy the following:
$$\sum_{i=1}^{n}d^{2}_{n,i}=\frac{\sum_{i=1}^{n}(\sum_{j=1}^{i}(-1)^{j}X_{j})^{2}}{\sum_{i=1}^{n}X_{i}^{2}}, \quad \sum_{i=1}^{n}(d_{n,i}-d_{n,i-1})^{2}=\frac{\sum_{i=1}^{n}(2\sum_{j=1}^{i-1}(-1)^{j}X_{j}+(-1)^{i}X_{i})^{2}}{\sum_{i=1}^{n}X_{i}^{2}}$$
$$\mbox{and} \quad |d_{n,n}| \le \max_{1\le i\le n}|d_{n,i}|=\frac{\max_{1\le i\le n}|\sum_{j=1}^{i}(-1)^{j}X_{j}|}{\sqrt{\sum_{i=1}^{n}X_{i}^{2}}}.$$
It follows that


\begin{theorem} \label{Linreg}
For the parametric linear regression problem above, if $\displaystyle \frac{\sigma^2_n}{n} \to \sigma^2_{g}$ and the following conditions are satisfied, 
\begin{enumerate}
\item $\xi_{i}$ is a reversible ergodic Markov chain,
\item $$\frac{\max_{1\le i\le n}|\sum_{j=1}^{i}(-1)^{j}X_{j}|}{\sqrt{\sum_{i=1}^{n}X_{i}^{2}}} \to 0 \quad \mbox{as}\quad n \to \infty,$$
\item $$\frac{\sum_{i=1}^{n}(2\sum_{j=1}^{i-1}(-1)^{j}X_{j}+(-1)^{i}X_{i})^{2}}{\sum_{i=1}^{n}X_{i}^{2}} \to 0 \quad \mbox{as}\quad n\to \infty,$$
\item $$\frac{\sum_{i=1}^{n}(\sum_{j=1}^{i}(-1)^{j}X_{j})^{2}}{\sum_{i=1}^{n}X_{i}^{2}} \to c^{2} \quad \mbox{as}\quad n\to \infty,$$
\end{enumerate}
then the CLT holds in the form 
\begin{equation}
\sqrt{\sum_{i=1}^{n}X_{i}^{2}}(\hat{\beta}-\beta) \Longrightarrow N(0, 2\pi h(0)c^{2}),
\end{equation}
where $h$ is the spectral density of the stationary sequence $\xi_{i}+\xi_{i+1}$.
\end{theorem}

\begin{example}
Take in the linear regression problem $X_2=\cdots=X_{n}=X$ and $X_{1}=X/2$, and a stationary ergodic error sequence. It follows that $\displaystyle\tilde{d}_{n,i}=\frac{2}{\sqrt{4n-3}}$, for $1<i\leq n$ and $\displaystyle\tilde{d}_{n,1}=d_{n,1}=\frac{1}{\sqrt{4n-3}}=d_{n,i}$ for all $i\leq n$.
So, $\displaystyle\lim_{n\to\infty}\sum_{i=1}^{n}d^{2}_{n,i}=\frac{1}{4}$, and all the assumptions of Theorem \ref{Linreg} are satisfied. Thus, 
\begin{equation}
X\sqrt{n}(\hat{\beta}-\beta) \Longrightarrow N(0, \pi h(0)/2),
\end{equation}
where $h$ is the spectral density of the stationary sequence $\xi_{i}+\xi_{i+1}$.
\end{example}

\begin{remark}
Notice that adding or subtracting a term that converges to zero does not influence the convergence. Thus, we can conclude that the result will hold for any $X=X_{1}=\cdots=X_{n}$. This is the case of the simple linear model problem $Y_{i}=\mu+\xi_{i}$, extended to the dependent case. 
It is easy to show that for an i.i.d sequence of innovations, $h(0)=4\sigma^2_g/2\pi=2\sigma^2_g/\pi$. Thus, the limiting variance is $\sigma_g^2$, giving the central limit theorem for i.i.d. sequences. 
\end{remark}

Notice that in the above example, we can also look for the central limit theorem in the form 
\begin{equation} b_n (\hat{\beta}-\beta)\to N(0, \sigma^2). \label{CLTbeta} \end{equation} 
For the usual central limit theorem, one can consider the formula in the special case when $b_n=\sqrt{n}.$ We have

$$b_n(\hat{\beta}-\beta)=\sum_{i=1}^{n}\frac{b_n X_i}{\sum_{j=1}^{n}X_j^2}\varepsilon_i.$$
Therefore, the following result follows.

\begin{corollary} \label{corbeta}
The central limit theorem \ref{CLTbeta} holds for the parametric linear regression problem above with $\sigma^2=2\pi h(0)c^2$, if $\displaystyle \frac{\sigma^2_n}{n} \to \sigma^2_{g}$ and the following conditions are satisfied, 

\begin{enumerate}
\item $\xi_{i}$ is a reversible ergodic Markov chain,
\item $$\frac{b_n}{\sum_{i=1}^{n}X_{i}^{2}}{\max_{1\le i\le n}|\sum_{j=1}^{i}(-1)^{j}X_{j}|} \to 0 \quad \mbox{as}\quad n \to \infty,$$
\item $$(\frac{b_n}{\sum_{i=1}^{n}X_{i}^{2}})^2 {\sum_{i=1}^{n}(2\sum_{j=1}^{i-1}(-1)^{j}X_{j}+(-1)^{i}X_{i})^{2}} \to 0 \quad \mbox{as}\quad n\to \infty,$$
\item $$(\frac{b_n}{\sum_{i=1}^{n}X_{i}^{2}})^2{\sum_{i=1}^{n}(\sum_{j=1}^{i}(-1)^{j}X_{j})^{2}} \to c^{2} \quad \mbox{as}\quad n\to \infty.$$
\end{enumerate}

\end{corollary}

\begin{example}
Assume that in designing the above regression model, one takes $X_i=i$. It turns out, that 
$$\tilde{d}_{n,2k}=\frac{12kb_n}{n(n+1)(2n+1)},\quad \tilde{d}_{n,2k-1}=\frac{(12k-6)b_n}{n(n+1)(2n+1)},$$ $$ \mbox{and}\quad d_{n,2k}=d_{n,2k-1}=\frac{6kb_n}{n(n+1)(2n+1)}, \quad \mbox{for }\quad k=1,\cdots, n .$$
\end{example}

It follows that $$\frac{b_n}{\sum_{i=1}^{n}X_{i}^{2}}{\max_{1\le i\le n}|\sum_{j=1}^{i}(-1)^{j}X_{j}|}\leq \frac{3b_n}{2n^2},$$
$$(\frac{b_n}{\sum_{i=1}^{n}X_{i}^{2}})^2 {\sum_{i=1}^{n}(2\sum_{j=1}^{i-1}(-1)^{j}X_{j}+(-1)^{i}X_{i})^{2}}=d_{n,1}^2+d_{n,n}^2\leq \frac{(9n^2+9)b_n^2}{n^6},$$
$$\mbox{and}\quad (\frac{b_{2n}}{\sum_{i=1}^{2n}X_{i}^{2}})^2{\sum_{i=1}^{2n}(\sum_{j=1}^{i}(-1)^{j}X_{j})^{2}} = (\frac{6b_{2n}}{2n(2n+1)(4n+1)})^2\sum_{k=1}^{n}2k^2=\frac{3b^2_{2n}(n+1)}{n(2n+1)(4n+1)^2}.$$
Therefore, if we take $b_n$ such that $\displaystyle \frac{b_n}{n^{3/2}}\to \frac{2}{3}c^2$, then all the assumption of Theorem \ref{corbeta} are satisfied and the central limit theorem (\ref{CLTbeta}) holds with $\sigma^2=2\pi h(0)c^2$.

\begin{remark}
In this example, $b_n$ is of order $n^{3/2}$. So, $b_n$ is a lot larger than $\sqrt{n}$. The sequence converges faster to its mean than under the usual CLT for i.i.d. innovations.
\end{remark}

\begin{remark}
Using $X_i=1$, we obtain $\bar{\varepsilon}$. The conditions of the theorem are not satisfied. This theorem falls short on this example, for which the result of Kipnis and Varadhan (1986) applies. 
\end{remark}

If we now consider that the sequence of explanatory variables is a random sample of variables from a distribution $f$ and is independent of the errors, then the application of Kipnis and Varadhan (1986) applies to the Reversible Markov chain $(X_i, \varepsilon_i)$ and $g(u,v)=uv$, and the following results hold.

\begin{theorem} \label{Linreg1} 
Let $(X_{i}, i\in \mathbb{N})$ be a random sample from a distribution with mean zero and finite variance $\sigma^2_x$. Let $(\varepsilon_{i}, i\in \mathbb{N})$ be a mean zero stationary ergodic reversible Markov chain with variance $\sigma^2$. Then 

\begin{equation}
\sqrt{n}(\hat{\beta}-\beta)\Longrightarrow N(0, \frac{\sigma^2}{\sigma^2_x}).
\end{equation}

\end{theorem}

\begin{theorem} \label{Linreg2} 
Let $(X_{i}, i\in \mathbb{N})$ be a random sample from a distribution with finite variance $\sigma^2_x$. Let $(\varepsilon_{i}, i\in \mathbb{N})$ be a mean zero stationary reversible ergodic Markov chain with variance $\sigma^2$. If $n^{-1}var(S_n(\varepsilon))\to \sigma^2$, then 

\begin{equation}
\sqrt{n}(\hat{\beta}-\beta)\Longrightarrow N(0, \frac{\sigma^2}{\mathbb{E}(X^2)}).
\end{equation}

\end{theorem}

The proof of Theorem \ref{Linreg1} and Theerem \ref{Linreg2} consist of simplifying $\sqrt{n}(\hat{\beta}-\beta)$ by means of the law of large numbers [$\displaystyle \frac{1}{n}\sum_{i=1}^{n}X^2\to^P \mathbb{E}(X^2)$], then computing $$\displaystyle n^{-1} var(\frac{1}{\mathbb{E}(X^2)}S_{n}(X_i\varepsilon_i))=\frac{\sigma^2 \sigma^2_x}{\mathbb{E}^2(X^2)}+\frac{\mathbb{E}^2(X)}{\mathbb{E}^2(X^2)}\frac{Var(S_n (\varepsilon))}{n}.$$ 

\begin{remark}
Theorem \ref{Linreg1} allows a mean zero reversible Markov chain $(\varepsilon_i, i\in \mathbb{N})$ without conditions on the variance of its partial sums.
This means that we can have an example of Markov chain generated by the Hoeffding Lower bound copula $W(x,y)= \max(x+y-1, 0)$ or the Hoeffding upper bound copula $M(u,v)=\min (u,v)$. These two copulas generate Markov chains that are not mixing in any sense, with variance of partial sums of order $n^2$. These cases don't apply to Theorem \ref{Linreg2}. This is the first example that involves a Markov chains with one of these two copulas.
\end{remark}

\subsubsection{The general linear regression model}

We consider now the general linear regression problem $Y_i=\alpha+\beta X_i+\varepsilon_i$, where $(\varepsilon_i, i\in \mathbb{N})$ is a stationary reversible Markov chain. The least square estimates are

$$\hat{\beta}=\frac{\sum_{i=1}^n (X_i - \bar{X})Y_i}{\sum_{j=1}^{n}(X_j - \bar{X})^2}=\beta + \sum_{i=1}^{n}\bar{\nu}_{in} \varepsilon_i, \quad\mbox{where} \quad \bar{\nu}_{in}=\frac{X_i-\bar{X}}{\sum_{j=1}^n (X_j-\bar{X})^2},$$
$$
\hat{\alpha} = \bar{Y}-\hat{\beta}\bar{X}=\alpha+ \sum_{i=1}^{n}\bar{\mu}_{in} \varepsilon_i, \quad\mbox{where} \quad \bar{\mu}_{in}=\frac{1}{n}-\frac{(X_i-\bar{X})\bar{X}}{\sum_{j=1}^n (X_j-\bar{X})^2}.
$$

Therefore, we have $$b_n (\hat{\beta}-\beta)=\sum_{i=1}^{n}\nu_{in} \varepsilon_i \quad \mbox{and}\quad b_n (\hat{\alpha}-\alpha)=\sum_{i=1}^{n}\mu_{in} \varepsilon_i , $$
where $\nu_{in}=b_n\bar{\nu}_{in}$ and $\mu_{in}=b_n \bar{\mu}_{in}.$

The central limit theorem holds for $\hat{\alpha}$ or $\hat{\beta}$ if the corresponding conditions of Theorem \ref{Rev-CLT} are satisfied. Notice that the case of equal $X$ values no longer applies here because it reduces to a one-parameter problem.
Now, if we assume that $(X_k, k\in\mathbb{N})$ is a random sample from a distribution $f$ with mean $\mu$ and variance $\sigma^2_x$, the analysis requires a different approach. Assume that the sequence of explanatory variables and the sequence of error terms are independent and $b_n=\sqrt{n}$. Then, 

$$b_n (\hat{\beta}-\beta)=\sum_{i=1}^{n}\nu_{in} \varepsilon_i = \frac{\sqrt{n}}{\sum_{j=1}^n (X_j-\bar{X})^2} \sum_{i=1}^{n}{(X_i-\mu_x)\varepsilon_i}-\frac{\sqrt{n}(\bar{X}-\mu_x)}{\sum_{j=1}^n (X_j-\bar{X})^2}\sum_{i=1}^{n}{\varepsilon_i}.$$
Using the law of large numbers, the CLT for the random sample from $f$ and simple calculations, it can be shown that 

$$\frac{\sqrt{n}(\bar{X}-\mu_x)}{\sum_{j=1}^n (X_j-\bar{X})^2}\sum_{i=1}^{n}{\varepsilon_i}\to^P 0, \quad \mbox{if} \quad var(S_n (\varepsilon))=o(n^2).$$

Thus, the limiting distribution of $\displaystyle \sqrt{n}(\hat{\beta}-\beta)$ is that of $\displaystyle \frac{1}{\sqrt{n}\sigma^2_x} \sum_{i=1}^{n}{(X_i-\mu_x)\varepsilon_i}$.
Therefore, the following can be derived by replacing variables in the proofs of Theorem \ref{Linreg1} and Theorem \ref{Linreg2}.

\begin{corollary}\label{Lincorr}
For the general linear regression model, let $(X_{i}, i\in \mathbb{N})$ be a random sample from a distribution with finite variance $\sigma^2_x$ and mean $\mu_x$. Let $(\varepsilon_{i}, i\in \mathbb{N})$ be a stationary reversible ergodic Markov chain with variance $\sigma^2$.

\begin{enumerate} 
\item If $var(S_n(\varepsilon)) =o(n^2),$ then $$\displaystyle \sqrt{n}(\hat{\beta}-\beta)\Longrightarrow N(0, \frac{\sigma^2}{\sigma^2_x}).$$

\item If $n^{-1}var(S_n(\varepsilon))\to \sigma^2$, then 

$\displaystyle \sqrt{n}
\begin{pmatrix}
\hat{\alpha}-\alpha \\
\hat{\beta}-\beta 
\end{pmatrix}
\Longrightarrow N(\begin{pmatrix}
0 \\
0 
\end{pmatrix}
, \Sigma ),
$
where 
$\displaystyle \Sigma = 
\begin{pmatrix}
\frac{\sigma^2(\sigma^2_x+\mu^2_x)}{\sigma^2_x} & -\frac{\sigma^2\mu_x}{\sigma^2_x} \\
-\frac{\sigma^2\mu_x}{\sigma^2_x} & \frac{\sigma^2}{\sigma^2_x} 
\end{pmatrix}.
$
\end{enumerate}
\end{corollary}

To prove the second statement of this corollary, we first use the law of large numbers to simplify the expression of the vector, then apply the Cramer-Wold device to the obtained result a follows. 
$$Z_n(t_1, t_2)=\sqrt{n}t_1(\hat{\alpha}-\alpha)+\sqrt{n}t_2 (\hat{\beta}-\beta)=A_n(t_1,t_2)+B_n (t_1,t_2),$$ with $B_n (t_1,t_2)\to^P 0$ under the conditions of the corollary and by Theorem \ref{Linreg2}
$$A_n(t_1,t_2)= \frac{1}{\sigma^2_x\sqrt{n}}\sum_{i=1}^{n}(t_1\sigma^2_x+t_1\mu^2_x+(t_2-t_1\mu_x)X_i-t_2\mu_x)\varepsilon_i \Longrightarrow N(0, \sigma^2_z), \quad \mbox{where}$$
$$ \sigma^2_z=\frac{(\sigma^2_x+\mu^2_x)\sigma^2}{\sigma^2}t_1^2+\frac{\sigma^2}{\sigma^2}t_2^2-2\frac{\mu_x\sigma^2}{\sigma^2}t_1 t_2= (t_1,t_2)\Sigma (t_1,t_2)'.$$
Therefore, by the Cramer-Wold device, the second statement of the corollary holds .

\begin{remark} Under the assumptions of Corollary \ref{Lincorr}, it holds that:

\begin{enumerate}
\item $var(Y)=\beta^2\sigma^2_x+\sigma^2$. $S^2_y=\frac{1}{n-1}\sum_{i=1}^{n}(Y_i-\bar{Y})^2$ is an asymptotically unbiased estimator of $var(Y)$ and $S^2_x=\frac{1}{n-1}\sum_{i=1}^{n}(X_i-\bar{X})^2$ is an unbiased estimator of $\sigma^2_x$. These quantities can be replaced in calculations by their estimates for large sample inference when they are unknown.

\item If $\mu_x=0$, then the two estimators are asymptotically independent.

\item By the Delta method, for any bivariate function $m$ such that $\nabla m \begin{pmatrix}
\alpha \\
\beta 
\end{pmatrix}\ne \bold{0},$

$\displaystyle \sqrt{n}
(m \begin{pmatrix}
\hat{\alpha} \\
\hat{\beta} 
\end{pmatrix}
-m \begin{pmatrix}
\alpha \\
\beta 
\end{pmatrix})
\Longrightarrow N(\begin{pmatrix}
0 \\
0 
\end{pmatrix}
,(\nabla m \begin{pmatrix}
\alpha \\
\beta 
\end{pmatrix})' \Sigma \nabla m \begin{pmatrix}
\alpha \\
\beta 
\end{pmatrix}) $, when $n^{-1}var(S_n(\varepsilon))\to \sigma^2.$

\end{enumerate}

\end{remark}
\subsection{The Non-parametric regression problem}
Consider the non-parametric regression problem for $g(x)=\mathbb{E}(Y|X=x)$, where $Y=g(X)+\varepsilon$ with $\mathbb{E}(\varepsilon)=0$. Assume that $K$ is a kernel function with properties that we will state later. The kernel regression estimator is the Nadaraya-Watson estimator
$$\bar{g}(x)=\frac{\sum_{i=1}^{n}K(\frac{x-X_i}{h})Y_i}{\sum_{j=1}^{n}K(\frac{x-X_j}{h})}=\frac{\sum_{i=1}^{n}K(\frac{x-X_i}{h})g(X_i)}{\sum_{j=1}^{n}K(\frac{x-X_j}{h})}+\frac{\sum_{i=1}^{n}K(\frac{x-X_i}{h})\varepsilon_i}{\sum_{j=1}^{n}K(\frac{x-X_j}{h})}$$

It follows that for a fixed sample of values of $X$, $$b_n \Big(\bar{g}(x)-\frac{\sum_{i=1}^{n}K(\frac{x-X_i}{h})g(X_i)}{\sum_{j=1}^{n}K(\frac{x-X_j}{h})}\Big) \to N(0, 2\pi h(0)c^2 ),$$ if the conditions of Theorem \ref{Rev-CLT} are satisfied with $$\tilde{d}_{n,i}=\frac{b_n K(\frac{x-X_i}{h})}{\sum_{j=1}^{n}K(\frac{x-X_j}{h})}, \quad i=1,\cdots, n.$$

\begin{theorem}\label{Lon1}
Let $(\varepsilon_k, k\in\mathbb{N})$ be a mean zero stationary reversible Markov chain independent of a random sample $(X_k, k\in\mathbb{N})$ from a distribution $f$. Assume that $f$ has a bounded first order derivative $f'$. Let $K$ be a kernel function on $\mathbb{R}$ such that $\displaystyle \int K(u)du=1$ and $\displaystyle \int K^4(u)du<\infty$. Assume that $\mathbb{E}(\varepsilon^4_i)<\infty$ and $b_n=n^{1/2-\alpha/6}$ and $h=n^{-\alpha/3}$ for some $3/5< \alpha < 3$ and $g$ has two bounded derivatives. If 
\begin{equation}
\sum_{k=1}^{n}cov(\varepsilon^2_0, \varepsilon^2_k)=o(n), 
\end{equation}

and 
\begin{equation}
\sum_{k=1}^{n}cov(\varepsilon_0, \varepsilon_k) = o(n^{\alpha/3}),
\end{equation}
then 
\begin{equation}
b_n (\bar{g}(x)-g(x))\to N(0, \frac{\sigma^2}{f(x)}\int K^2 (u)du).
\end{equation}

\end{theorem}

It can be easily shown that when $(X_{i}, i\in \mathbb{N})$ are independent random variables that are independent of $(\varepsilon_i, i\in\mathbb{N})$, under the assumptions of Theorem \ref{Lon1}. We have 

$$b_n (\bar{g}(x)-g(x))=B_n(x)+C_n(x), \mbox{\quad with \quad}$$ $$ B_n (x)=b_n \frac{\sum_{i=1}^{n}K(\frac{x-X_i}{h})(g(X_i)-g(x))}{\sum_{j=1}^{n}K(\frac{x-X_j}{h})}, \quad C_n (x)=b_n \frac{\sum_{i=1}^{n}K(\frac{x-X_i}{h})\varepsilon_i}{\sum_{j=1}^{n}K(\frac{x-X_j}{h})}, \quad \mbox{where}$$
$$Var(B_n(x))=O(\frac{hb_n^2}{n})=O(n^{-2\alpha/3}) ,\mbox{and}\quad \mathbb{E}(B_n(x))=O(b_nh^2)=O(n^{1/2-5\alpha/6})$$ $$ \mbox{and}\quad C_n (x)-\frac{b_n}{nhf(x)}{\sum_{i=1}^{n}K(\frac{x-X_i}{h})\varepsilon_i}\to^P 0 .$$
Therefore, by the Slutsky theorem, the limiting distribution of $b_n (\bar{g}(x)-g(x))$ is that of $\frac{b_n}{nhf(x)}{\sum_{i=1}^{n}K(\frac{x-X_i}{h})\varepsilon_i}.$ Thus, the proof of Theorem \ref{Lon1} reduces to that of the following.

\begin{theorem} \label{Lon2} Let $(\varepsilon_k, k\in\mathbb{N})$ be a mean zero stationary reversible Markov chain independent of a random sample $(X_k, k\in\mathbb{N})$ from a distribution $f$. Assume that $f$ has a bounded first order derivative $f'$. Let $K$ be a kernel function on $\mathbb{R}$ such that $\displaystyle \int K(u)du=1$ and $\displaystyle \int K^4(u)du<\infty$. Assume that $\mathbb{E}(\varepsilon^4_i)<\infty$ and $b_n=n^{1/2-\alpha/6}$ and $h=n^{-\alpha/3}$ for some $0\leq \alpha< 3$. If 

\begin{equation}
\sum_{k=1}^{n}cov(\varepsilon^2_0, \varepsilon^2_k)=o(n), 
\end{equation}

and 
\begin{equation}
\sum_{k=1}^{n}cov(\varepsilon_0, \varepsilon_k) = o(n^{\alpha/3}),
\end{equation}
then 
\begin{equation}
\frac{b_n}{nhf(x)}{\sum_{i=1}^{n}K(\frac{x-X_i}{h})\varepsilon_i}\to N(0, \sigma^2_g).
\end{equation}
where $\displaystyle \sigma^2_g=\frac{\sigma^2}{f(x)}\int K^2(u)du$.

\end{theorem}

We shall prove Theorem \ref{Lon2} using the following theorem from Longla and al. (2015).

\begin{theorem} \label{TheoLon}
For a stationary reversible Markov chain $(\gamma_i,i\in\mathbb{Z}),$ and for any real functions $g_n$, defining $X_{n,k} =g_n(\gamma_k)$, if 
\begin{equation}
\mathbb{E}(X_{n,k}^4)<\infty, \quad \mathbb{E}X_{n,k}=0, \quad \mathbb{E}(X_{n,0}^2)\to \sigma_g^2 \label{eq1}
\end{equation}

\begin{equation}
cov(X_{n,0}, X_{n,2})+\sum_{k=2}^{n}cov(X_{n,0}, X_{n,u}) \to 0 \label{eq2}
\end{equation}

\begin{equation}
\frac{1}{n}(var(X^2_{n,0})+\sum_{n=0}^{n}cov(X^2_{n,0}, X^2_{n,u}) \to 0, \label{eq3}
\end{equation}
then 

\begin{equation}
\frac{1}{\sqrt{n}}\sum_{k=1}^{n}X_{n,k} \to N(0, \sigma_g^2).
\end{equation}

\end{theorem}

To prove Theorem \ref{Lon2}, we shall check the assumptions of Theorem \ref{TheoLon} for the reversible Markov chain $((X_i, \varepsilon_i), i\in\mathbb{N})$ and the functions $$g_n(u,v)=\frac{b_n}{\sqrt{n}hf(x)}K(\frac{x-u}{h})v.$$
Noticing that $\mathbb{E}(\varepsilon)=0$ and $\varepsilon$ and $X$ are independent, it follows that condition (\ref{eq1}) is satisfied if $f'$ is bounded, $K\in \mathbb{L}^4(\mathbb{R})$, $\mathbb{E}(\varepsilon^4)<\infty $ and 
$$\frac{b_n^2}{nh^2f^2(x)}\mathbb{E}(K^2(\frac{x-X_1}{h}))\mathbb{E}(\varepsilon^2_1) \equiv \frac{b_n^2}{nhf(x)}\int K^2(u)du\sigma^2 \to \sigma_g^2.$$

Consider the assumption (\ref{eq2}). For any $k\ne 0$, using independence of $X_0, X_k $ and $\varepsilon$, we obtain $$cov(X_{n,0}, X_{n,k})=\frac{b_n^{2}}{nh^2f^2(x)}\Big(\mathbb{E}(K(\frac{x-X_0}{h}))\Big)^2 cov(\varepsilon_0, \varepsilon_k), \quad \mbox{and}$$ 

$$cov(X_{n,0}, X_{n,2})+\sum_{k=2}^{n}cov(X_{n,0}, X_{n,u})=\frac{b_n^{2}}{nh^2f^2(x)}\Big(\mathbb{E}(K(\frac{x-X_0}{h}))\Big)^2 (cov(\varepsilon_0, \varepsilon_2)+ \sum_{k=2}^{n}cov(\varepsilon_0, \varepsilon_k)).$$
Moreover, this quantity is equivalent to $$m_n=\frac{b_n^{2}}{n}\Big(\int K(u)du\Big)^2 (cov(\varepsilon_0, \varepsilon_2)+ \sum_{k=2}^{n}cov(\varepsilon_0, \varepsilon_k)).$$
Taking into consideration the fact that $\displaystyle \frac{b_n^2}{n}\to 0$, we can conclude that, if $$\sum_{k=1}^{n}cov(\varepsilon_0, \varepsilon_k) = o(\frac{n}{b_n^2}),$$

$$\mbox{then} \quad \lim_{n\to\infty}m_n =\lim_{n\to\infty} \frac{b_n^{2}}{n}\Big(\int K(u)du\Big)^2 (\sum_{k=1}^{n}cov(\varepsilon_0, \varepsilon_k)) =0.$$

Consider the assumption (\ref{eq3}). For any $k > 0$, using independence of $X_0, X_k $ and $\varepsilon$, we obtain
$$cov(X^2_{n,0}, X^2_{n,k})=\frac{b_n^{4}}{n^2 h^4f^4(x)}\Big(\mathbb{E}(K^2(\frac{x-X_0}{h}))\Big)^2 cov(\varepsilon^2_0, \varepsilon^2_k), \quad \mbox{and}$$ 

$$\frac{1}{n}(var(X^2_{n,0})+\sum_{n=0}^{n}cov(X^2_{n,0}, X^2_{n,k}))=\frac{b_n^{4}\Big(\mathbb{E}(K^2(\frac{x-X_0}{h}))\Big)^2 }{n^3 h^4f^4(x)} (\sum_{k=1}^{n}cov(\varepsilon^2_0, \varepsilon^2_u)) + \frac{2b_n^4 \mathbb{E}K^4(\frac{x-X_0}{h})var(\varepsilon_0^2))}{n^3h^4f^4(x)}.$$
It is clear that this last quantity has the same limit as $n\to \infty$ as 
$$M_{1n}=\frac{b_n^{4}}{n^3 h^2f^2(x)}(\int K^2(u))^2du (\sum_{k=1}^{n}cov(\varepsilon^2_0, \varepsilon^2_k)) + \frac{2b_n^4 var(\varepsilon_0^2))}{n^3h^3f^3(x)}\int K^4(u)du .$$

It is obvious, that $\displaystyle \lim_{n\to\infty} M_{1n}=0$, iff $\displaystyle \frac{b_n^4}{n^3h^3}\to 0$ and $\displaystyle \sum_{k=1}^{n}cov(\varepsilon^2_0, \varepsilon^2_k)=o(\frac{n^3h^2}{b_n^4})$.
This ends the proof of Theorem \ref{Lon2}. To complete the proof of Theorem \ref{Lon1}, it remains to notice that the equivalence of the two limiting distributions required $\alpha >3/5.$

\end{document}